\documentclass[11pt]{amsart}

   \setlength{\topmargin}{-5mm}
    \setlength{\textwidth}{152mm}
    \setlength{\textheight}{230mm}
    \setlength{\oddsidemargin}{12pt}
    \setlength{\evensidemargin}{12pt}

\usepackage{amsmath,amsthm,amssymb}
\usepackage{mathrsfs} 

\usepackage[all]{xy}

\allowdisplaybreaks

\newtheorem{thm}{Theorem}[section]
\newtheorem{prop}[thm]{Proposition}

\newtheorem{cor}[thm]{Corollary}
\newtheorem{lem}[thm]{Lemma}

\theoremstyle{definition}

\numberwithin{equation}{section}
\newtheorem{rem}[thm]{\bf Remark}
\newtheorem{ex}[thm]{\bf Example}

\newtheorem{defn}[thm]{\bf Definition}

\newtheorem{const}[thm]{\bf Construction}

\def\bbQ{\mathbb{Q}}
\def\bbR{\mathbb{R}}

\def\bbZ{\mathbb{Z}}

\def\bfG{\mathbf{G}}

\def\bfSigma{\mathbf{\Sigma}}

\def\bfc{\mathbf{c}}

\def\bfe{\mathbf{e}}

\def\bfp{\mathbf{p}}

\def\bfu{\mathbf{u}}

\def\bfw{\mathbf{w}}
\def\bfx{\mathbf{x}}
\def\bfy{\mathbf{y}}

\def\rmid{\mathrm{id}}

\def\rmpr{\mathrm{pr}}

\def\calF{\mathcal{A}}

\def\calC{\mathcal{C}}
\def\calF{\mathcal{F}}

\def\calP{\mathcal{P}}

\def\frakd{\mathfrak{d}}
\def\frakD{\mathfrak{D}}
\def\frakDs{\mathfrak{D}_{\fraks}}
\def\frakg{\mathfrak{g}}

\def\frakj{\mathfrak{j}}
\def\frakp{\mathfrak{p}}

\def\fraks{\mathfrak{s}}
\def\scrL{\mathscr{L}}
\def\scrH{\mathscr{H}}

\def\l{\ell}
\def\d{\delta}
\def\ss{\scriptstyle}



\begin{document}

\title[Dilogarithm identities in CSD]{
Dilogarithm identities\\ in cluster scattering diagrams
\ \\
}
\address{\noindent Graduate School of Mathematics, Nagoya University, 
Chikusa-ku, Nagoya,
464-8604, Japan}
\email{nakanisi@math.nagoya-u.ac.jp}
\author[T. Nakanishi]{Tomoki Nakanishi}
\keywords{cluster algebra, scattering diagram, dilogarithm, pentagon identity}
\subjclass[2020]{Primary 13F60}
\maketitle

\begin{abstract}
We extend the notion of  $y$-variables (coefficients) in cluster algebras to cluster scattering diagrams. Accordingly, we extend the dilogarithm identity associated with a period in a cluster pattern to the one associated with a loop in a cluster scattering diagram. We show that  these identities are constructed from  and reduced  to trivial ones by applying the pentagon identity possibly infinitely many times.
\end{abstract}

\section{Introduction}

In the seminal paper by Gross-Hacking-Keel-Kontsevich \cite{Gross14},
 it was revealed that there is an intimate
relation between cluster algebras and scattering diagrams. 
The underlying cluster pattern for a cluster algebra is constructed by \emph{mutations} \cite{Fomin02},
while a scattering diagram is  constructed based on the \emph{consistency} \cite{Kontsevich06, Gross07}.
In spite of the  difference of their principles, the whole information of a given cluster pattern
$\bfSigma$
is contained in a certain scattering diagram $\frakD$ called
a \emph{cluster scattering diagram} (CSD, for short).

Let us briefly summarize the relation between $\bfSigma$ and  $\frakD$. 
Let $\Delta(\bfG^{t_0})$ be the fan  (the \emph{$G$-fan})  spanned by the cones of $G$-matrices
of $\bfSigma$ with the initial vertex $t_0$.
The $G$-fan $\Delta(\bfG^{t_0})$ is isomorphic to the \emph{cluster complex}  $\Delta(\bfSigma)$ of $\bfSigma$ in \cite{Fomin03a}
as a simplicial complex.
Then, 
the union of the codimension one cones of
$\Delta(\bfG^{t_0})$ is embedded into the support $\mathrm{Supp}(\frakD)$ of $\frakD$.
Also, it coincides with  $\mathrm{Supp}(\frakD)$
if and only if $\bfSigma$ is of finite type.
In general,
the structure of $\frakD$ outside
 the support 
of the $G$-fan $\Delta(\bfG^{t_0})$ is very complicated.

So, at least superficially, a cluster pattern carries only a partial information of $\frakD$.
However, looking at the construction of  $\frakD$ in
 \cite{Gross14}, 
 we recognize that   $\frakD$ is  uniquely  constructed (up to isomorphism) only from
 the \emph{initial exchange matrix} $B_{t_0}$ of $\bfSigma$.
 Thus, we may think that a cluster pattern $\bfSigma$
 and a CSD $\frakD$ are actually \emph{one inseparable object}
 associated with $B_{t_0}$.

There is one great advantage of working with $\frakD$ rather than only $\bfSigma$;
namely, it has an underlying group $G$ which we call the \emph{structure group} for $\frakD$.
Since $G$ is defined by the inverse limit,  the infinite product is well-defined.
Then, first of all, it is possible to consider an \emph{infinite product of mutations},
which is nothing but a \emph{path-ordered product} in $\frakD$.
Moreover, the group $G$ controls the whole structure of $\frakD$ locally and globally.
In particular,
the following facts were presented somewhat implicitly in \cite{Gross14},
and described more explicitly in \cite{Nakanishi22a}:
\begin{itemize}
\item[(a).]
There are some distinguished elements  of $G$ called
 the \emph{dilogarithm elements}, and they satisfy  the \emph{pentagon relation}.
\item[(b).]
Up to equivalence, $\frakD$ consists of walls whose wall elements
are dilogarithm elements with positive rational exponents.
\item[(c).]
Dilogarithm elements act as \emph{mutations}
on cluster variables in $\bfSigma$.
\item[(d).]
Every \emph{consistency relation} in  $\frakD$ is constructed from and reduced to a trivial one
(i.e., $g=g$ with $g\in G$) by applying the pentagon relation (and commutative relations) possibly infinitely many times.
\end{itemize}
Thus, the dilogarithm elements and the pentagon relation
are everything for a CSD; moreover,  they build a bridge over the gap between the principles
of $\bfSigma$ and $\frakD$, namely, mutations and consistency.
We note that
the importance of the dilogarithm  in scattering diagrams was already noticed in \cite{Kontsevich08}. Also, the construction of scattering diagrams by the pentagon relation appeared in \cite{Gross09} in a geometrical setting.

With this perspective in mind,
it is natural to extend basic notions for a cluster pattern to a CSD.
The notion of a \emph{seed}, which is most fundamental for a cluster pattern,
 cannot be extended to the entire CSD,
because there is no global chamber structure in a CSD in general \cite{Gross14}.
However, the notions of cluster variables ($x$-variables),
coefficients ($y$-variables), and their mutations can be extended to the entire CSD.
In fact, the \emph{theta functions} in \cite{Gross14} is an extension of
the notion of \emph{cluster monomials}.
In this paper we  extend the notion of  \emph{$y$-variables}  to a CSD.
Accordingly, we  also extend the \emph{dilogarithm identity} (DI, for short) associated with a 
\emph{period} in a cluster pattern \cite{Nakanishi10c}
 to  the one associated with a \emph{loop} in a CSD
 based on the classical mechanical method in \cite{Gekhtman16}.
This is our first main result (Theorem~\ref{thm:DI1}).

Even though the structure of a CSD is  very  complicated as already mentioned,
it is tractable in some sense
thanks to the property (d).
As a result,  the above DIs 
are also constructed from and
reduced to  trivial ones by applying the celebrated  \emph{pentagon identity} (the \emph{five-term relation}) of the dilogarithm function
\cite{Lewin81}
\emph{possibly infinitely many times} in a parallel way.
(See \cite[\S2.A]{Zagier07} for the background of the reducibility problem of the dilogarithm identities
to the pentagon identity.)
This also provides an alternative   and independent proof of the DIs.
This is our second main result (Theorem~\ref{thm:DI2}).
We also give  examples of  DIs for rank 2 CSDs of affine type.

We conclude by giving some remarks on related works.

(i). The $y$-variables are also known as \emph{cluster $\mathcal X$-variables} \cite{Fock03}.
The corresponding theta functions were studied via the CSDs with principal coefficients in \cite{Gross14} and via the scattering diagrams with the Langlands dual data
($\mathcal X$ scattering diagrmas) in \cite{Cheung19, Cheung21}.
Apparently our    $y$-variables in CSDs are defined in a different way,
and we do not know how
they   are related with the ones in the above works. See also Remark \ref{rem:yv1} (c).
The results in this paper show that our $y$-variables behave nicely at least in view of
dilogarithm identities.

(ii).
The \emph{quantum analogue} of a CSD (QCSD) was constructed and studied by 
\cite{Mandel15,Davison19,Cheung20} in the skew-symmetric case,
and it is naturally extended to the skew-symmetrizable case as well.
From their construction,
we immediately obtain the \emph{quantum DI (QDI)}
(possibly with infinite product)
associated with a loop in a  QCSD for the quantum dilogarithm of  \cite{Faddeev93,Faddeev94},
which are  the extension of the  QDIs for a cluster pattern
studied in \cite{Fock07,Keller11,Nagao11b, Kashaev11}.
They are clearly the counterpart of the DIs in this paper.
However, we stress that the main results in this paper
are \emph{not} straightforwardly obtained from the quantum case
by two reasons.
Firstly, the DIs here should be obtained from QDI
in their semiclassical limits in the sense of 
\cite{Kashaev11}; however, the method therein is only heuristic and not rigorous
even for the finite product case.
Secondly, it is not yet known that every constancy relation in a QCSD
is reduced to a trivial one by the pentagon relation.

\medskip
Acknowledgments.
The author thanks  Bernhard Keller for suggesting the problem
of extending the dilogarithm identities from cluster patterns to CSDs.
He also thanks referees for helpful comments.
This work is supported in part by JSPS Grant No.~JP16H03922.

\section{Cluster scattering diagrams}
In this section we quickly recall  basic notions for CSDs in \cite{Gross14}.
See also \cite{Nakanishi22a} for a review, which is closer to the present context.

\subsection{Structure group}
Let $\Gamma$ be a \emph{fixed data} consisting of the following:
\begin{itemize}
\item a lattice $N$ of rank $r$,
\item a skew-symmetric bilinear form $\{\cdot, \cdot\}: N \times N \rightarrow \bbQ$,
\item a sublattice $N^{\circ}\subset N$ of finite index such that  $\{N^{\circ}, N\}\subset \bbZ$,
\item positive integers $\d_1$, \dots, $\d_r$ such that
there is a basis $(e_1,\dots,e_r)$ of $N$, where $(\d_1e_1,\dots,\d_re_r)$
is a basis of $N^{\circ}$,
\item $M=\mathrm{Hom}(N, \bbZ)$ and $M^{\circ}=\mathrm{Hom}(N^{\circ}, \bbZ)$.
\end{itemize}
Let $M_{\bbR}=M\otimes_{\bbZ} \bbR$.
Let
$\langle n, m\rangle$ denote  the canonical paring either for $N^{\circ}\times M^{\circ}$
or for $N\times M_{\bbR}$.
For $n\in N$, $n\neq 0$, let $n^{\perp}:=\{ z\in M_{\bbR} \mid \langle n, z\rangle =0\}$.

Let $\fraks=(e_1,\dots, e_r)$ be a \emph{seed} for $\Gamma$,
which are a basis of $N$ such that $(\d_1e_1,\dots, \d_re_r)$ are a basis of $N^{\circ}$.
Let 
\begin{align}
N^+=\Biggl\{ \sum_{i=1}^r a_i e_i \mid a_i \in \bbZ_{\geq 0}, \sum_{i=1}^r a_i >0\Biggr\}
\end{align}
be the set of the positive vectors of $N$ with respect to $\fraks$.
Let $N^+_{\rmpr}$ denote the set of primitive elements in $N^+$.
The degree function $\deg: N^+\rightarrow \bbZ_{> 0}$ is defined by $\deg( \sum_{i=1}^r a_i e_i ):=\sum_{i=1}^r a_i$.
Let $\frakg$ be the $N^+$-graded Lie algebra defined by
\begin{align}
\label{eq:Xcom1}
\frakg=\bigoplus_{n\in N^+} \bbQ X_n,
\quad
[X_n, X_{n'}]:=\{n,n'\}X_{n+n'}.
\end{align}
Let $\widehat \frakg$ be the completion of $\frakg$ with respect to $\deg$, and let $G=G_{\Gamma,\fraks}=\exp(\widehat \frakg)$
be the exponential group of $\widehat \frakg$.
Namely,
an element of $G$ is given by a formal symbol $\exp(X)$ ($X\in  \widehat \frakg$)
and the product is defined by the Baker-Campbell-Hausdorff formula.
We call $G$ the \emph{structure group} for the forthcoming scattering diagrams.
Let $G^{>\ell}$ be the normal subgroup of $G$ generated by the elements with degrees greater than $\ell$,
and let $G^{\leq \ell}:=G/G^{>\ell}$ be its quotient.
For each $n\in N_{\rmpr}^+$, let $G_n^{\parallel}$ be the abelian subgroup of $G$ 
generated by elements $\exp(c_{jn}X_{jn})$ ($j>0$, $c_{jn}\in \bbQ$)
admitting the infinite product.

\subsection{Initial seed for cluster pattern}
Let $(e^*_1, \dots, e^*_r)$ be the basis of $M$ which is dual to $\fraks=(e_1,\dots, e_r)$.
Let $f_i= \d_i^{-1}e^*_i$.
Then,  $(f_1, \dots, f_r)$ is a basis of $M^{\circ}$, which is dual to
the basis $(\d_1 e_1,\dots, \d_r e_r)$ of $N^{\circ}$.
Let $\bfx=(x_i)_{i=1}^r$, $\bfy=(y_i)_{i=1}^r$, $B=B_{\Gamma,\fraks}=(b_{ij})_{i,j=1}^r$ with
\begin{align}
\label{eq:xyB1}
x_i = x^{f_i}, \quad y_i = y^{e_i}, \quad b_{ij}=\{\d_i e_i,e_j\},
\end{align}
where $x$ and $y$ are formal variables for formal exponentials.
Then, $B$ is a skew-symmetrizable integer matrix with
a  left skew-symmetrizer $\Delta^{-1}$, where we set
 $\Delta=\mathrm{diag}(\delta_1,\dots,\delta_r)$.
Thus, $\Sigma=(\bfx, \bfy, B)$ is naturally identified with
the \emph{initial} seed for the cluster pattern $\bfSigma_{\fraks}$ corresponding to $\frakD_{\fraks}$.
Conversely, for a given skew-symmetrizable integer matrix $B$, there is a (not unique) pair of $\Gamma$ and $\fraks$ such that $B=B_{\Gamma,\fraks}$.
Moreover,
two groups $G_{\Gamma,\fraks}$ and $G_{\Gamma',\fraks'}$ are isomorphic
if $B_{\Gamma,\fraks}=B_{\Gamma',\fraks'}$.
In this paper, we concentrate on $y$-variables only.

\subsection{$y$-representation}
We introduce a monoid $Q:=N^+ \sqcup \{0\}\subset N$,
the monoid algebra $\bbQ[Q]$ of $Q$ over $\bbQ$,
and its completion $ \bbQ[[Q]]$ with respect to $\deg$.
An element of $ \bbQ[[Q]]$  is written as a formal infinite sum with a formal variable $y$ as
\begin{align}
\label{eq:y0}
\sum_{n\in Q}c_n y^{n},
\quad
(c_n\in \bbQ).
\end{align}
It is also regarded as a formal power series of the
\emph{initial $y$-variables} $\bfy=(y_1,\dots,y_r)$
in \eqref{eq:xyB1}.
We consider an action of $X_n$ ($n\in N^+$)  on $\bbQ[[Q]]$
\begin{align}
\label{eq:Xnaction1}
X_n(y^{n'})= \{n,n'\} y^{n'+n},
\end{align}
which is a derivation,
and we linearly extend it to the action of any $X\in \widehat\frakg$.
Then, the resulting map $\widehat\frakg \rightarrow \mathrm{Der}(\bbQ[[Q]])$
is a Lie algebra homomorphism.
Moreover, its exponential action
\begin{align}
(\mathrm{Exp}\, X)(y^{n'}):=
\sum_{j=0}^{\infty}
\frac{1}{j!} X^j(y^{n'})
\quad
(X\in \widehat\frakg)
\end{align}
 is well-defined on $\bbQ[[Q]]$.
 Then, $\mathrm{Exp}\, X$
 is an algebra automorphism of $\bbQ[[Q]]$ 
(e.g., \cite[\S1.2]{Jacobson79}).
Thus, we have a representation of $G$
\begin{align}
\begin{matrix}
\rho_y:&  G& \rightarrow & \mathrm{Aut}(\bbQ[[Q]])
\\
& \exp X & \mapsto &\mathrm{Exp}\, X.
\end{matrix}
\end{align}
We call it the \emph{$y$-representation} of $G$.

\subsection{Dilogarithm elements and pentagon relation}

Let us introduce some distinguished elements in $G$.
\begin{defn}[Dilogarithm element]
For any $n\in N^+$, we define
\begin{align}
\label{eq:diloge1}
\Psi[n]:=\exp\Biggl(\,  \sum_{j=1}^{\infty} \frac{(-1)^{j+1}}{j^2} X_{jn}\Biggr) \in G_{n_0}^{\parallel},
\end{align}
where $n_0\in  N_{\rmpr}^+$ is the one such that $n=jn_0$ for some $j\in \bbZ_{>0}$.
We call it the \emph{dilogarithm element} for $n$.
\end{defn}

The element  $\Psi[n]$ acts on  $ \bbQ[[Q]]$ under $\rho_y$ as
\begin{align}
\label{eq:ymut1}
\begin{split}
\Psi[n](y^{n'})&= \exp\Biggl(\,\sum_{j=1}^{\infty} \frac{(-1)^{j+1}}{j^2} X_{jn}\Biggr)(y^{n'})\\
&= y^{n'}\exp\Biggl(\{n, n'\} \sum_{j=1}^{\infty} \frac{(-1)^{j+1}}{j} y^{jn}\Biggr)\\
 &=y^{n'}(1+y^n)^{\{n,n'\}}.
 \end{split}
\end{align}
Observe that
this is essentially the automorphism part of the Fock-Goncharov decomposition
of mutations of $y$-variables \cite[\S 2.1]{Fock03}.

It is easy to see that
the dilogarithm elements $\Psi[n]^c$ ($n\in N^+$, $c\in \bbQ$) are generators of $G$
admitting the infinite product.
Moreover, they satisfy the following remarkable relations.
\begin{prop}
\label{prop:pent1}
Let $n_1,n_2\in N^+$.
The following relations hold in $G$.
\par
(a). If $\{n_2,n_1\}=0$,
for any $c_1,c_2\in \bbQ$,
\begin{align}
\label{eq:com1}
\Psi[n_2]^{c_2} \Psi[ n_1]^{c_1}=\Psi[  n_1 ]^{c_1} \Psi[ n_2]^{c_2}.
\end{align}

(b). {\rm (Pentagon relation
\cite[Example 1.14]{Gross14}, \cite[Prop.\ III.1.13]{Nakanishi22a}.)} If $\{n_2,n_1\}=c$ $(c\in \bbQ, c\neq 0)$,
\begin{align}
\label{eq:pent1}
\quad
\Psi[n_2 ]^{1/c} \Psi[ n_1]^{1/c}=
\Psi[  n_1 ]^{1/c} \Psi[n_1+n_2]^{1/c} \Psi[ n_2]^{1/c}.
\end{align}
\end{prop}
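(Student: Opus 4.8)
The plan is to reduce both statements to computations in the pronilpotent Lie algebra $\widehat\frakg$, i.e.\ to verify the corresponding identities among the exponents via the Baker--Campbell--Hausdorff formula, working modulo $G^{>\ell}$ for each $\ell$ and then passing to the inverse limit. For part~(a), if $\{n_2,n_1\}=0$ then for all $j,k>0$ we have $[X_{jn_2},X_{kn_1}]=\{jn_2,kn_1\}X_{jn_2+kn_1}=jk\{n_2,n_1\}X_{\cdots}=0$; since $\Psi[n_2]^{c_2}$ and $\Psi[n_1]^{c_1}$ are exponentials of elements lying in $\bigoplus_{j}\bbQ X_{jn_2}$ and $\bigoplus_k\bbQ X_{kn_1}$ respectively, these two subalgebras commute, so the two group elements commute and \eqref{eq:com1} is immediate.

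For part~(b) I would first rescale: set $Y_i:=c^{-1}X_{n_i}$ together with the full collinear families $Y_i^{(j)}:=c^{-1}X_{jn_i}$, so that $\Psi[n_i]^{1/c}=\exp\bigl(\sum_{j\ge1}\tfrac{(-1)^{j+1}}{j^2}c^{-1}X_{jn_i}\bigr)$. The key structural observation is that the Lie subalgebra $\frakn$ of $\widehat\frakg$ generated by $X_{n_1}$ and $X_{n_2}$ is supported on the vectors $an_1+bn_2$ with $a,b\ge0$, $a+b\ge1$, and that the single bracket $[X_{n_1},X_{n_2}]=-cX_{n_1+n_2}$ generates (with $X_{n_1},X_{n_2}$) a nilpotent Lie algebra once we observe the crucial relations controlling how $X_{jn_1}$, $X_{kn_2}$ interact — these reduce everything to the rank-two ``Heisenberg-like'' pattern. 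The cleanest route is to invoke that this is exactly the defining pentagon relation for the dilogarithm elements, whose truncations modulo $\deg>\ell$ are finite BCH identities; so it suffices to check \eqref{eq:pent1} in $G^{\le\ell}$ for every $\ell$, which is a finite (if tedious) Lie-algebraic computation, and then take the limit. Since the statement is cited from \cite[Example~1.14]{Gross14} and \cite[Prop.~III.1.13]{Nakanishi22a}, I would either quote it outright or reproduce the degree-by-degree verification.

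Alternatively, and perhaps more transparently, one can prove \eqref{eq:pent1} by applying it under the faithful-enough representation $\rho_y$: both sides are automorphisms of $\bbQ[[Q]]$, and by \eqref{eq:ymut1} the action of $\Psi[n]$ on $y^{n'}$ is $y^{n'}(1+y^n)^{\{n,n'\}}$. One then checks that the left side sends $y^{n'}\mapsto y^{n'}(1+y^{n_1})^{\{n_1,n'\}}(1+y^{n_2})^{\{n_2,n'\}}$ after substitution, and the right side sends $y^{n'}$ to $y^{n'}(1+y^{n_2})^{\{n_2,n'\}}\bigl(1+y^{n_1+n_2}|_{\text{after }\Psi[n_2]}\bigr)^{\{n_1+n_2,n'\}}(1+y^{n_1}|_{\cdots})^{\{n_1,n'\}}$; expanding $\{n_2,n_1\}=c$ one verifies $(1+y^{n_1})(1+y^{n_1+n_2})(1+y^{n_2})^{-1}$-type cancellations reduce to $1+y^{n_1}+y^{n_1+n_2}=(1+y^{n_1})(1+y^{n_2})$ exactly when the exponents match up as claimed. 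The main obstacle is bookkeeping: one must track how each factor transforms the monomial $y^{n_1}$, $y^{n_2}$, $y^{n_1+n_2}$ under the preceding factors, and confirm that $\rho_y$ is injective enough on the relevant subgroup (it is, since $\rho_y$ restricted to the subgroup generated by $\Psi[n_1],\Psi[n_2]$ is faithful — an element acting trivially on all $y^{n'}$ with $\langle n',-\rangle$ pairing nondegenerately must be the identity) so that equality of actions forces equality in $G$.
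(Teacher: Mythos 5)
Your part (a) is correct: since $[X_{jn_2},X_{kn_1}]=jk\{n_2,n_1\}X_{jn_2+kn_1}=0$, the exponents of $\Psi[n_2]^{c_2}$ and $\Psi[n_1]^{c_1}$ lie in two commuting abelian subalgebras, so the group elements commute. The paper instead reads (a) off from the action \eqref{eq:ymut1}; your argument is equally valid and slightly more self-contained.

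For part (b), your second route is exactly the paper's proof: compare the actions of both sides on $y^{n'}$ using \eqref{eq:ymut1}. Your additional remark that one must know equality of actions forces equality in $G$ is a genuine point the paper leaves implicit, and it is correct: both sides lie in the exponential of the completed span of the $X_{an_1+bn_2}$ ($a,b\geq 0$, $a+b>0$), and on this subgroup $\rho_y$ is injective because the lowest-degree homogeneous part of $\log g$ acts nontrivially on $y^{n_1}$ or $y^{n_2}$ (for each such $an_1+bn_2$ one of $\{an_1+bn_2,n_1\}=bc$, $\{an_1+bn_2,n_2\}=-ac$ is nonzero, and the resulting monomials are distinct since $n_1,n_2$ are independent). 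However, the computation you sketch would fail if carried out literally. All exponents must carry the factor $1/c$; more importantly, the left-hand side does \emph{not} send $y^{n'}$ to $y^{n'}(1+y^{n_1})^{\{n_1,n'\}}(1+y^{n_2})^{\{n_2,n'\}}$, because $\Psi[n_2]^{1/c}$, applied after $\Psi[n_1]^{1/c}$, transforms $y^{n_1}\mapsto y^{n_1}(1+y^{n_2})$; the correct left-hand action is
\[
y^{n'}\ \longmapsto\ y^{n'}\,(1+y^{n_2})^{\{n_2,n'\}/c}\,\bigl(1+y^{n_1}+y^{n_1+n_2}\bigr)^{\{n_1,n'\}/c}.
\]
Likewise the identity you invoke, $1+y^{n_1}+y^{n_1+n_2}=(1+y^{n_1})(1+y^{n_2})$, is false; what actually enters is $1+y^{n_1}+y^{n_2}+y^{n_1+n_2}=(1+y^{n_1})(1+y^{n_2})$, e.g.\ at the step where $\Psi[n_1]^{1/c}$ (sending $y^{n_2},y^{n_1+n_2}\mapsto y^{n_2}(1+y^{n_1})^{-1},\,y^{n_1+n_2}(1+y^{n_1})^{-1}$) is applied to the factor $1+y^{n_2}+y^{n_1+n_2}$ arising on the right-hand side, turning it into $1+y^{n_2}$; with this bookkeeping done correctly the two sides agree. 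Your first route is not developed into a proof: the subalgebra generated by $X_{n_1},X_{n_2}$ is only pronilpotent (nilpotent after truncation by $G^{>\ell}$), not ``Heisenberg-like,'' and as written that paragraph amounts to an appeal to the cited references rather than an argument.
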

\begin{proof}
The equality \eqref{eq:com1} is clear by \eqref{eq:ymut1}.
One can prove \eqref{eq:pent1} by comparing the  actions of both  sides on $y^{n'}$
using \eqref{eq:ymut1}.
\end{proof}

\subsection{Cluster scattering diagrams}
A wall $\bfw=(\frakd, g)_{n}$ for $\fraks$ is a triplet such that
$n\in N_{\rmpr}^+$, a cone $\frakd\subset n^{\perp}$ of codimension 1, and $g\in G_n^{\parallel}$. 
We call $n$, $\frakd$, $g$, the \emph{normal vector}, the \emph{support}, the \emph{wall element}
of $\bfw$, respectively.
Let $p^*:N \rightarrow M^{\circ}$, $n \mapsto \{ \cdot, n\}$.
We say that a wall $\bfw=(\frakd, g)_{n}$ is \emph{incoming} if $p^*(n)\in \frakd$ holds. 

\begin{defn}[Scattering diagram]
A \emph{scattering diagram} $\frakD=\{ \bfw_{\lambda}=(\frakd_{\lambda}, g_{{\lambda}})_{n_{\lambda}}
\}_{\lambda\in \Lambda}$ for $\fraks$ is a collection of walls for $\fraks$
satisfying the  following finiteness condition:
\emph{For any degree $\ell$,
there are only finitely many walls such that $\pi_{\ell}(g_{\lambda})\neq \rmid$,
where $\pi_{\ell}:G \rightarrow G^{\leq \ell}$ is the canonical projection.}
\end{defn}

For a scattering diagram  $\frakD$, we define
\begin{align}
\mathrm{Supp}(\frakD)&=\bigcup_{\lambda\in \Lambda} \frakd_{\lambda},
\quad
\mathrm{Sing}(\frakD)=\bigcup_{\lambda\in \Lambda} \partial\frakd_{\lambda}
\cup
\bigcup_{\ss \lambda, \lambda'\in \Lambda \atop \ss \dim 
\frakd_{\lambda}\cap \frakd_{\lambda'}=r-2
} \frakd_{\lambda}\cap \frakd_{\lambda'}.
\end{align}

A curve $\gamma:[0,1]\rightarrow M_{\bbR}$ is
\emph{admissible} for  $\frakD$
if it satisfies the following properties:
\begin{itemize}
\item[(i)]
The end points of $\gamma$ are in $M_{\bbR}\setminus \mathrm{Supp}(\frakD)$.
\item[(ii)]
It is a smooth curve, and it intersects $\mathrm{Supp}(\frakD)$ transversally.
\item[(iii)]
$\gamma$ does not intersect  $\mathrm{Sing}(\frakD)$.
\end{itemize}
For any admissible curve $\gamma$,
the path-ordered product $\frakp_{\gamma, \frakD}\in G$ is defined
as the product of
the wall elements $g_{{\lambda}}^{\epsilon_{\lambda}}$ of walls $\bfw_{\lambda}$ of $\frakD$ intersected by $\gamma$
in the order of intersection, where $\epsilon_{\lambda}$ is the \emph{intersection sign}
defined by
\begin{align}
\label{eq:int1}
\epsilon_{\lambda}=
\begin{cases}
1 & \langle n_{\lambda}, \gamma'\rangle <0,\\
-1 & \langle n_{\lambda}, \gamma'\rangle >0,
\end{cases}
\end{align}
and $\gamma'$ is the velocity vector of $\gamma$ at the wall $\bfw_{\lambda}$.
The product $\frakp_{\gamma, \frakD}$ is an infinite one in general,
and it is well-defined in $G$ due to the finiteness condition.
See \cite[\S1.1]{Gross14} for a more precise definition.
We say that a pair of scattering diagrams $\frakD$  and $\frakD'$ are \emph{equivalent} if  $\frakp_{\gamma, \frakD}=\frakp_{\gamma, \frakD'}$
for any admissible curve $\gamma$  for both $\frakD$ and $\frakD'$.
We say that a scattering diagram $\frakD$ is \emph{consistent} if  $\frakp_{\gamma, \frakD}=\rmid$
for any admissible loop (i.e., closed curve) $\gamma$ for $\frakD$.

\begin{defn}[Cluster scattering diagram]
A \emph{cluster scattering diagram} $\frakD_{\fraks}$ (CSD, for short) for $\fraks$ is a consistent scattering diagram 
whose set of incoming walls are given by
\begin{align}
\mathrm{In}_{\fraks}:=\{ \bfw_{e_i}=(e_i^{\perp}, \Psi[e_i]^{\d_i})_{e_i} \mid i=1,\dots, r\}.
\end{align}
\end{defn}

For $n\in N^+$,  let $\d(n)$ be the smallest positive rational number 
such that $\d(n) n \in N^{\circ}$,
which is called the \emph{normalization factor} of $n$.
For example, $\d(e_i)=\d_i$.
Note that $\d(tn)=\d(n)/t$ holds for any $n\in N^+_{\rmpr}$ and $t\in \bbZ_{>0}$.
Also, $\d(n)$ is an integer for any $n\in N^+_{\rmpr}$,

The following is the most fundamental theorem on CSDs.

\begin{thm}[{\cite[Theorems 1.12 \& 1.13]{Gross14}}]
\label{thm:CSD1}
(a). There exists a CSD $\frakD_{\fraks}$ uniquely up to  equivalence.
\par
(b).
There exists a (still not unique) CSD $\frakD_{\fraks}$ such that
every wall element   has
the form
\begin{align}
\label{eq:gt1}
\Psi[tn]^{s \d(tn)}
\quad (n\in N_{\rmpr}^+,\ t, s\in \bbZ_{> 0}).
\end{align}
\end{thm}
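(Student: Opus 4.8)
The plan is to reconstruct the order-by-order construction of \cite{Gross14}, in the spirit of Kontsevich--Soibelman \cite{Kontsevich06} and Gross--Siebert \cite{Gross07}. I would work along the degree filtration, using that $G$ is the inverse limit of the quotients $G^{\leq\ell}$ and that the kernel of $G^{\leq\ell+1}\to G^{\leq\ell}$ is the central abelian subgroup $\exp(\frakg_{\ell+1})$, where $\frakg_{\ell+1}=\bigoplus_{\deg n=\ell+1}\bbQ X_n$. The consequence I would exploit is: if a scattering diagram $\frakD\supset\mathrm{In}_{\fraks}$ is consistent modulo degree $\ell$ (i.e.\ $\pi_{\ell}(\frakp_{\gamma,\frakD})=\rmid$ for every admissible loop $\gamma$), then the image of each $\frakp_{\gamma,\frakD}$ in $G^{\leq\ell+1}$ lies in $\exp(\frakg_{\ell+1})$, say equals $\exp(c_\gamma)$, and $\gamma\mapsto c_\gamma$ is additive under concatenation of loops. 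The base case is that $\mathrm{In}_{\fraks}$ is consistent modulo degree $1$: the degree-$1$ part of $\frakp_{\gamma,\mathrm{In}_{\fraks}}$ equals $\sum_i\bigl(\sum_{\text{crossings of }e_i^{\perp}}\epsilon\bigr)\d_i X_{e_i}$, and a closed loop has signed intersection number $0$ with each hyperplane $e_i^{\perp}$.

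The technical core is the completion lemma: if $\frakD\supset\mathrm{In}_{\fraks}$ is consistent modulo degree $\ell$, there is a finite set $W_{\ell+1}$ of outgoing walls, each with wall element trivial modulo degree $\ell$ and with support a cone in some $n^{\perp}$, $\deg n=\ell+1$, such that $\frakD\cup W_{\ell+1}$ is consistent modulo degree $\ell+1$; moreover $W_{\ell+1}$ is unique up to equivalence. I would prove it by a local-to-global argument: since $M_{\bbR}$ is contractible, $\gamma\mapsto c_\gamma$ is determined by the contributions of small loops encircling the codimension-two cones of $\mathrm{Sing}(\frakD)$, and around such a cone only the finitely many walls (modulo degree $\ell+1$) whose supports contain it are crossed; after a generic perturbation of the walls this becomes a rank-$\leq 2$ scattering problem, where one verifies by direct computation — essentially the identity behind the pentagon relation, Proposition~\ref{prop:pent1}, read in $G^{\leq\ell+1}$ — that inserting walls $(n^{\perp},\exp(a_n X_n))$ with $\deg n=\ell+1$ and uniquely determined $a_n$ cancels every $c_\gamma$; additivity of $c_\gamma$ makes the local solutions glue, and uniqueness holds because two completions differ by a degree-$(\ell+1)$ diagram all of whose loops are trivial modulo degree $\ell+1$. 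Granting the lemma, (a) follows: set $\frakD_{\fraks}=\mathrm{In}_{\fraks}\cup\bigcup_{\ell\geq 2}W_{\ell}$; the finiteness condition holds since each $W_{\ell}$ is finite and affects only degrees $\geq\ell$, consistency holds since $\frakp_{\gamma,\frakD_{\fraks}}\equiv\rmid$ modulo every degree, and applying the uniqueness clause order by order shows any two CSDs are equivalent.

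For (b) I would rerun the same induction under the reinforced hypothesis that every wall element constructed has the form $\Psi[tn]^{s\d(tn)}$ with $n\in N_{\rmpr}^+$ and $t,s\in\bbZ_{>0}$; the base case holds since $\Psi[e_i]^{\d_i}=\Psi[e_i]^{1\cdot\d(e_i)}$. The inductive step then requires the \emph{positivity of cluster scattering diagrams}: the rank-$\leq 2$ local completion of a family of walls of this positive-dilogarithm form can again be achieved with walls of the same form, and the exponents come out as positive integer multiples of the normalization factors $\d(tn)$ — the integrality being forced by membership in $G_{n_0}^{\parallel}$ together with the $N^{\circ}$-integral structure of the walls. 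I expect this positivity input to be the one real obstacle; the rest is routine if lengthy bookkeeping. To supply it I would invoke, in the skew-symmetric rank-$2$ case, the description of the wall functions through Euler characteristics of quiver moduli spaces (Kontsevich--Soibelman \cite{Kontsevich08}), and in general the positivity of scattering diagrams in the geometric (tropical vertex) setting (cf.\ \cite{Gross09}), reducing from arbitrary rank to rank $2$ as in the consistency step, or else follow the reductions used in \cite{Gross14}.
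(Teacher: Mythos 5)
Your part (a) is the standard Kontsevich--Soibelman/Gross--Siebert order-by-order argument, which is the route of the cited source \cite{Gross14}; the paper itself does not reprove (a) but cites it, and your outline (the kernel of $G^{\leq\ell+1}\to G^{\leq\ell}$ being the central subgroup $\exp\bigl(\bigoplus_{\deg n=\ell+1}\bbQ X_n\bigr)$, the degree-one obstruction vanishing by signed intersection numbers with the hyperplanes $e_i^{\perp}$, and the perturbation-to-rank-$2$ local analysis with unique local correction) is a faithful reconstruction of it. So for existence and uniqueness up to equivalence I see no real objection.

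Part (b) is where your route genuinely diverges and where the gap lies. You defer the positivity to Euler-characteristic formulas for quiver moduli \cite{Kontsevich08} and to positivity in the tropical-vertex setting \cite{Gross09}. Those address skew-symmetric (or geometric) situations, whereas Theorem~\ref{thm:CSD1}(b) is asserted for arbitrary skew-symmetrizable fixed data and with the precise exponent shape \eqref{eq:gt1}, namely a positive \emph{integer} $s$ times the normalization factor $\d(tn)$. Neither reference hands you the integrality of $s$ measured against $\d(tn)$, and your suggested mechanism (``membership in $G_{n_0}^{\parallel}$ together with the $N^{\circ}$-integral structure'') does not force it, since $G_{n_0}^{\parallel}$ only constrains the exponents to be rational; the reduction from the skew-symmetrizable to the skew-symmetric case is exactly the nontrivial step you leave to ``the reductions used in \cite{Gross14}''. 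The route the paper actually relies on, recalled in Construction~\ref{const:CSD1}, needs no geometric input at all: at each perpendicular joint the local rank-$2$ consistency is restored by reordering an anti-ordered product of factors $\Psi[t'n']^{s'\d(t'n')}$ using only the commutation and pentagon relations of Proposition~\ref{prop:pent1}, and the Ordering Lemma (Proposition~\ref{prop:rank21}) guarantees that the reordered product again consists of factors of the form \eqref{eq:gt1}; inserting these as the new outgoing walls gives the positive realization directly. This pentagon-based mechanism is not an optional shortcut in the present context: Theorem~\ref{thm:reduce1} and Theorem~\ref{thm:DI2} are built on it, so even if you supplied the missing skew-symmetrizable reduction, establishing (b) through the geometric positivity results would prove the statement without producing the structure the rest of the paper uses. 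To close the gap, either carry out the pentagon/ordering argument (as in \cite{Nakanishi22a}) or reproduce the reduction arguments of \cite[Appendix C.3]{Gross14} in full, including the step that yields integrality of $s$ relative to $\d(tn)$.
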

In this paper we exclusively use $\frakDs$ given in Theorem \ref{thm:CSD1} (b),
which we call a \emph{positive realization} a CSD $\frakDs$.

\begin{rem}
The fact $s\in \bbZ_{> 0}$ is the key to prove the \emph{Laurent positivity} of cluster variables and
theta functions
in \cite{Gross14}, though we do not use this connection in this paper.
\end{rem}

\section{Dilogarithm identities in CSDs}

\subsection{$y$-variables for CSD}
Let us extend the  notion of  $y$-variables (coefficients) for a cluster pattern $\bfSigma_{\fraks}$ to a CSD $\frakD_{\fraks}$.
We say that a curve is \emph{weakly admissible} for $\frakD$ if it satisfies the
conditions (ii) and (iii) for an admissible curve.
The definition of the path-ordered product $\frakp_{\gamma,\frakD}$ is extended  to a weakly admissible curve $\gamma$
by ignoring the contribution from the walls at the end points.

\begin{defn}[$y$-variable/$c$-vector for CSD]
Let $\bfw=(\frakd, g)_{n}$ be any wall of $\frakD_{\fraks}$
with $g=\Psi[tn]^{s \d(tn)}$ as in  \eqref{eq:gt1}.
Let $z\in \frakd$
with $z\not\in \mathrm{Sing}(\frakD_{\fraks})$.
Let 
\begin{align}
\calC^+:=\{ z \in M_{\bbR}\mid \langle e_i, z\rangle \geq 0\  (i=1,\dots, r)\}.
\end{align}
Let $\gamma_{z}$ be any weakly admissible curve in $\frakD_{\fraks}$
 from $z$ to any point in $\mathrm{Int}(\calC^+)$.
Then, we define a \emph{$y$-variable $y_{z}[tn]$ at $z$ with  the $c$-vector} $tn$ by
\begin{align}
\label{eq:y1}
y_{z}[tn]:=\frakp_{\gamma_{z},\frakD_{\fraks}}(y^{tn}) \in \bbQ[[Q]],
\end{align}
where the path-ordered product $\frakp_{\gamma_{z},\frakD_{\fraks}}\in G$ acts on  $ y^{tn}$
under the $y$-represent\-ation $\rho_y$.
\end{defn}

Since any $g\in G_n^{\parallel}$ acts trivially on $y^{tn}$, $y_{z}[tn]$ is independent of the choice of
$\gamma_z$ due to the consistency of $\frakD_{\fraks}$.
Also, due to our assumption on $\frakDs$, any wall element of $\frakD_{\fraks}$ has   the form \eqref{eq:gt1}.
Therefore,
$\frakp_{\gamma_{z},\frakD_{\fraks}}$ acts as a (possibly infinite) product
of   mutations in \eqref{eq:ymut1}.

\begin{rem}
\label{rem:yv1}
(a).
If $z$ belongs to a codimension 1 face of a  cluster chamber ($G$-cone) of $\frakDs$,
we have
 $t=s=1$ by the mutation invariance of $\frakDs$ \cite{Gross14}.
Moreover, $n=\varepsilon_{i;t} \bfc_{i;t}$ and
 $y_{z}[n]=y_{i;t}^{\varepsilon_{i;t}}$,
  where $y_{i;t}$ is an ordinary $y$-variable,
  and     $\varepsilon_{i;t}$ and $\bfc_{i;t}$ are the tropical sign
   and the $c$-vector for $y_{i;t}$, respectively.
The notion of a \emph{seed} cannot  be entirely extended for $\frakDs$,
because there is no overall chamber structure therein.
Thus, the composite mutation  \eqref{eq:y1} directly connects 
 the initial $y$-variables $\bfy$ and a given \emph{single} $y$-variable $y_{z}[tn]$
for $\frakDs$.
\par
(b).
Unlike  usual $y$-variables for a cluster pattern, 
all $y$-variables here have \emph{positive} $c$-vectors.
\par
(c).
Let $\theta_{Q,m}$ be the \emph{theta functions} in \cite{Gross14},
where $Q\in \mathrm{Int}(\calC^+)$ and
 $m \in \bbZ^r$.
 When $m$ belongs to a cluster chamber,
 the following formula holds  \cite[Theorem 4.9]{Gross14}:
 \begin{align}
\label{eq:theta1}
\theta_{Q,m}=\frakp_{\gamma,\frakD_{\fraks}}(x^{m}),
\end{align}
 where 
 the action of $\frakp_{\gamma,\frakD_{\fraks}}$ on $x^m$ is given by the \emph{(principal) $x$-representation} \cite[\S III.4]{Nakanishi22a}.
 Clearly, the definition \eqref{eq:y1}  is parallel to the formula \eqref{eq:theta1}.
 On the other hand, the formula \eqref{eq:theta1} is not valid  for general $m$.
 Thus, the relation between our $y$-variables and the theta functions
is not clear in general.

\par
(d). The variable $y_{z}[tn]$ changes discontinuously 
when $z\in \frakd$ crosses  the codimension 2 intersection (a joint) $\frakd\cap \frakd'$  with the support $\frakd'$
of another wall in $\frakDs$,
because $\frakp_{\gamma_{z},\frakD_{\fraks}}$ changes.
\end{rem}

\subsection{Euler and Rogers dilogarithms}

We define the Euler dilogarithm \cite{Lewin81} 
\begin{align}
\mathrm{Li}_2(x)&= \sum_{j=1}^{\infty}\frac{1}{j^2}x^j
\quad
(|x|< 1)
\\
&=-\int_{0}^{x} \frac{\log (1-y)}{y} \, dy
\quad
(x\leq 1),
\end{align}
and the Rogers dilogarithm
\begin{align}
L(x)
=
\mathrm{Li}_2(x)+ \frac{1}{2}\log x \log(1-x)
\quad
(0\leq x\leq 1).
\end{align}
The celebrated \emph{pentagon identity} (Abel's identity, the five-term relation) for $\mathrm{Li}_2(x)$ is neatly expressed
in terms of $L(x)$ as $(0\leq x, y <1)$
\begin{align}
\label{eq:pentL0}
L(x)+L(y)
=L(xy) + L\biggl( \frac{x(1-y)}{1-xy}\biggr)
+ L\biggl( \frac{y(1-x)}{1-xy}\biggr).
\end{align}

We introduce a variant of the Rogers dilogarithm,
which we call the \emph{modified Rogers dilogarithm}, as follows:
\begin{align}
\tilde L(x):&=
L\left(\frac{x}{1+x}\right)= -  \mathrm{Li}_2(-x) - \frac{1}{2}\log x \log(1+x)
\\
\label{eq:modL1}
&=
\frac{1}{2} \int_{0}^x 
\left\{ \frac{\log (1+y)}{y}
-
\frac{\log y}{1+y}
\right\}
dy
\quad
(0 \leq x).
\end{align}
The function
$\tilde L(x)$ is smooth but not analytic at $x=0$; however, it has the following  
Puiseux
 expansion
around $x=0$ \emph{with $\log$ factor}:
\begin{align}
\label{eq:Lexp1}
\tilde L(x)
&=
 \sum_{j=1}^{\infty} \frac{(-1)^{j+1} }{j^2}x^j 
-\frac{1}{2}\log x \sum_{j=1}^{\infty} \frac{(-1)^{j+1} }{j}x^j .
\end{align}
The pentagon identity \eqref{eq:pentL0} is expressed in terms of $\tilde L(x)$
as follows (e.g., \cite[\S 5.3]{Nakanishi11c}):
\begin{align}
\label{eq:pentL1}
\begin{split}
&\tilde L (y_2(1+y_1)) + \tilde L (y_1)
\\
&\qquad =
\tilde L(y_1(1+y_2+y_1y_2)^{-1})
+
\tilde L(y_1y_2(1+y_2)^{-1})
+
\tilde L(y_2).
\end{split}
\end{align}
The above arguments in $\tilde L(x)$ are identified with 
the $y$-variables with positive $c$-vectors in the $Y$-pattern  of type $A_2$ with
the initial exchange matrix
\begin{align}
B=
\begin{pmatrix}
0 & -1\\
1 & 0
\end{pmatrix}
\end{align}
as follows:
\begin{align}
\label{eq:A21}
\begin{split}
&
\begin{pmatrix}
\framebox{$y_1$}\\
y_2
\end{pmatrix}
\buildrel   {\mu_1} \over {\rightarrow}
\begin{pmatrix}
y_1^{-1}\\
\framebox{$y_2(1+y_1)$}
\end{pmatrix}
\buildrel   {\mu_2} \over {\rightarrow}
\begin{pmatrix}
y_1^{-1}(1+y_2+y_1y_2)\\
y_2^{-1}(1+y_1)^{-1}
\end{pmatrix},
\\
&
\begin{pmatrix}
y_1\\
\framebox{$y_2$}
\end{pmatrix}
\buildrel   {\mu_2} \over {\rightarrow}
\begin{pmatrix}
\framebox{$y_1y_2(1+y_2)^{-1}$}\\
y_2^{-1}
\end{pmatrix}
\buildrel   {\mu_1} \over {\rightarrow}
\begin{pmatrix}
{y_1^{-1}y_2^{-1}(1+y_2)}\\
\framebox{$y_1(1+y_2+y_1y_2)^{-1}$}
\end{pmatrix}
\\
&\hskip170pt
\buildrel   {\mu_2} \over {\rightarrow}
\begin{pmatrix}
y_2^{-1}(1+y_1)^{-1}\\
y_1^{-1}(1+y_2+y_1y_2)
\end{pmatrix}.
\end{split}
\end{align}
Also, observe the \emph{pentagon periodicity} \cite[\S2]{Fomin02}.
Namely, both results in \eqref{eq:A21} coincide up to
the transposition $\tau_{12}$.

\begin{rem}
A dilogarithm element  $\Psi[n]$ in \eqref{eq:diloge1}
is ``formally'' expressed as
\begin{align}
\Psi[n]=\exp( -\mathrm{Li}_2(-X_n))
\end{align}
if  we  ``identify'' the power $X_n^j$ with $X_{jn}$.
Be careful, however, that this is only formal, because $X_n^j$ and $X_{jn}$ act differently on $\bbQ[[Q]]$.
\end{rem}

\subsection{Dilogarithm identity associated with a loop in $\frakDs$}
Let $\frakD_{\fraks,\ell}$ be the scattering diagram consisting of walls  of $\frakD_{\fraks}$
such that $\pi_{\ell}(g_{\lambda})\neq \rmid$ in $G^{\leq  \ell}$.
We call $\frakD_{\fraks,\ell}$ the \emph{reduction} of $\frakD_{\fraks}$ at 
degree $\ell$.
Due to the finiteness condition, $\frakD_{\fraks,\ell}$ has only finitely many walls.

Let $\gamma$ be any admissible loop for $\frakD_{\fraks}$.
Let us fix degree $\ell$.
Suppose that $\gamma$ intersects walls  of $\frakD_{\fraks,\ell}$
at $z_1$, \dots, $z_{P-1}$ in this order.
There might be multiple walls with a common normal vector intersected by $\gamma$ at a time.
We distinguish them by allowing the multiplicity $z_a=z_{a+1}$.
Then, the walls crossed at $z_0, \cdots, z_{P-1}$ are parametrized as $\bfw_0,\dots, \bfw_{P-1}$.
By Theorem \ref{thm:CSD1} (b),
each wall $\bfw_a$ has the form
 \begin{align}
 \bfw_a = (\frakd_a, \Psi[t_a n_a]^{s_a \d(t_a n_a)})_{n_a}.
 \end{align}
By the consistency of $\frakD_{\fraks}$,
we have $\frakp_{\gamma, \frakDs}=\rmid$.
Thus, we have a consistency relation around $\gamma$,
\begin{align}
\label{eq:gseq1}
\begin{split}
\frakp_{\gamma, \frakD_{\fraks,\ell}}&=
\Psi[t_{P-1} n_{P-1}]^{\epsilon_{P-1} s_{P-1} \d(t_{P-1} n_{P-1})}\cdots
\Psi[t_1 n_1]^{\epsilon_1 s_1 \d(t_1 n_1)}
\equiv \rmid
\\
&
\hskip230pt
\mod G^{> \ell},
\end{split}
\end{align}
where $\epsilon_a$ is the  intersection sign defined by \eqref{eq:int1}.

For the variable $\bfy=(y_1$\, \dots, $y_r)$,
we call the following formal sum
\begin{align}
f(\bfy) +\sum_{i=1}^r (\log y_i) g_i(\bfy)
\quad
(f(\bfy), g_i(\bfy)\in \bbQ[[Q]])
\end{align}
a \emph{formal (generalized)  Puiseux series in  {$\bfy$} with $\log$ factor}.
Let $\calF^{> \ell}$ be the set of all formal power series $f(\bfy) \in \bbQ[[Q]]$ 
such that  all coefficients of $f(\bfy)$ vanish up to the total order $\ell$.
Let
\begin{align}
\calF^{> \ell}_{\log}:=\calF^{> \ell} + \sum_{i=1}^r (\log y_i) \calF^{> \ell}.
\end{align}

We first prove the reduced DI at level $\ell$
associated with $\gamma$.
\begin{thm}
\label{thm:DI0}
The following identity holds
 as a formal   Puiseux series in  {$\bfy$} with $\log$ factor:
\begin{align}
\label{eq:DI1}
\sum_{a=0}^{P-1}
\epsilon_a s_a
\d(t_a n_a)
\tilde L( y_{z_a}[t_an_a])
\equiv 0
\mod 
\calF^{> \ell}_{\log}.
\end{align}
\end{thm}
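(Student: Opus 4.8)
The plan is to mimic, at the level of the modified Rogers dilogarithm, the fact that $\frakp_{\gamma,\frakDs}\equiv\rmid\pmod{G^{>\ell}}$, by introducing a ``potential'' functional that tracks how the composite mutation $\frakp_{\gamma_z,\frakDs}$ transports a dilogarithm‐type quantity. First I would fix degree $\ell$ and work entirely in $\frakD_{\fraks,\ell}$, which has only finitely many walls, so all sums and products below are finite modulo $G^{>\ell}$ and all identities are to be read in $\calF^{>\ell}_{\log}$. Reading the loop $\gamma$ starting and ending in $\mathrm{Int}(\calC^+)$, the crossings at $z_0,\dots,z_{P-1}$ partition $\gamma$ into arcs; writing $\frakp_a:=\frakp_{\gamma_{z_a},\frakDs}$ for the tail of $\gamma$ from $z_a$ to the terminal point in $\mathrm{Int}(\calC^+)$, the definition \eqref{eq:y1} gives $y_{z_a}[t_an_a]=\frakp_a(y^{t_an_a})$, and successive tails differ exactly by the single wall element $\Psi[t_an_a]^{\epsilon_a s_a\d(t_an_a)}$. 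So the scheme is: (1) establish a \emph{one‐wall lemma} computing the change of $\tilde L$ across a single crossing; (2) telescope over $a=0,\dots,P-1$; (3) observe that the total change is governed by $\frakp_{\gamma,\frakDs}\equiv\rmid$, hence vanishes mod $\calF^{>\ell}_{\log}$.

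For step (1), the key computation is the behaviour of $\tilde L$ under the $y$-mutation \eqref{eq:ymut1}. Using the Puiseux expansion \eqref{eq:Lexp1}, $\tilde L(y^m)=\sum_{j\ge1}\frac{(-1)^{j+1}}{j^2}y^{jm}-\frac12\langle \log y,m\rangle\sum_{j\ge1}\frac{(-1)^{j+1}}{j}y^{jm}$ where $\langle\log y,m\rangle:=\sum_i m_i\log y_i$ for $m=\sum_i m_i e_i$; the crucial point is that $\tilde L(y^m)$ is an eigen‐type object whose ``derivative data'' matches the logarithm of the dilogarithm element. Concretely I expect to prove that for a wall element $g=\Psi[tn]^{s\d(tn)}$ with normal vector $n$, and for any $\frakp\in G$,
\begin{align}
\tilde L\bigl((\frakp g^{\epsilon})(y^{tn})\bigr)-\tilde L\bigl(\frakp(y^{tn})\bigr)
\equiv -\,\epsilon\, s\,\d(tn)\,\bigl(\text{exact $1$-cochain term attached to }g\bigr)
\pmod{\calF^{>\ell}_{\log}},
\end{align}
where the right side is the contribution of the single wall to a globally defined closed $1$-form on the complement of $\mathrm{Supp}(\frakD_{\fraks,\ell})$; this is the classical‐mechanical mechanism of \cite{Gekhtman16}, transplanted to CSDs. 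Equivalently, differentiating the would‐be identity \eqref{eq:DI1} with respect to the $y_i$ reduces it to a \emph{rational} identity among the $y_{z_a}[t_an_a]$ which is exactly the statement that the $1$-form $\sum_i \frac{\partial}{\partial y_i}(\text{LHS})\,dy_i$ is the pullback by $\frakp_{\gamma,\frakDs}$ of a fixed form; since $\frakp_{\gamma,\frakDs}\equiv\rmid$, this form is zero mod $\calF^{>\ell}_{\log}$, and then one integrates back, the constant of integration vanishing because $\gamma$ is a loop (start point $=$ end point, so no net $\log y_i$ or constant term survives).

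The main obstacle, as I see it, is handling the $\log$ factor cleanly: $\tilde L$ is not analytic at $0$, so one cannot simply Taylor‐expand, and the two pieces of \eqref{eq:Lexp1} transform differently under mutation — the analytic piece $-\mathrm{Li}_2(-y^m)$ picks up the naive pentagon‐type recombination, while the $\log$ piece $-\tfrac12\langle\log y,m\rangle\log(1+y^m)$ mixes the ``base'' coordinates $\log y_i$ with the power series, and one must check that the $c$-vector $t_an_a$ appearing in \eqref{eq:y1} is precisely what makes $\langle\log y,\,\cdot\,\rangle$ track correctly along the composite mutation (this is where $n=\varepsilon_{i;t}\bfc_{i;t}$ in Remark~\ref{rem:yv1}(a) and the positivity of $c$-vectors in Remark~\ref{rem:yv1}(b) enter). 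I would organize this by defining $F(\bfy):=\sum_{a}\epsilon_as_a\d(t_an_a)\tilde L(y_{z_a}[t_an_a])$, computing $y_i\partial_{y_i}F$ using the one‐wall lemma and the chain rule through each $\frakp_a$, telescoping to get $y_i\partial_{y_i}F\equiv (\text{something depending only on }\frakp_{\gamma,\frakDs})\equiv 0\pmod{\calF^{>\ell}_{\log}}$, and finally noting that $F$ has no constant or pure‐$\log$ term because at the start point of $\gamma$, which lies in $\mathrm{Int}(\calC^+)$, every $y_{z_a}[t_an_a]$ evaluated along the trivial tail reduces to a monomial whose $\tilde L$‐contributions cancel in pairs by the loop condition; dividing out, $F\equiv0$, which is \eqref{eq:DI1}.
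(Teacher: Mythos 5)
Your overall skeleton — differentiate the sum, show the derivative collapses to something controlled by $\frakp_{\gamma,\frakDs}\equiv\rmid$, integrate back, kill the constant — is the same as the paper's, which implements it via the classical-mechanical method: canonical coordinates $(\bfu,\bfp)$, the small phase space, and Lemma \ref{lem:dL1}, which states $\delta \scrL[n]=\sum_i \overline p_i'\delta\overline u_i'-\sum_i \overline p_i\delta\overline u_i$. But your proposal has a genuine gap exactly at the step that carries all the content. Your ``one-wall lemma'' is vacuous as written: for a wall element $g=\Psi[tn]^{s\d(tn)}\in G_{n_0}^{\parallel}$ one has $g^{\epsilon}(y^{tn})=y^{tn}$ (this is precisely why $y_{z}[tn]$ is well defined), so $\tilde L\bigl((\frakp g^{\epsilon})(y^{tn})\bigr)-\tilde L\bigl(\frakp (y^{tn})\bigr)=0$ identically, and it cannot produce the cancellation you need. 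The cancellation is not between successive values of one function across its own wall; it is between the derivative contributions of \emph{different} terms $\tilde L(y_{z_a}[t_an_a])$ and $\tilde L(y_{z_{a+1}}[t_{a+1}n_{a+1}])$ with different normal vectors, and proving it uses the precise structure of $\tilde L$ (the $\mathrm{Li}_2$ piece and the $\tfrac12\log\cdot\log$ piece weighted exactly so), the skew-symmetry of $\d_i^{-1}b_{ij}$ (which kills the quadratic cross term $\sum_i A_iB_i$ in the paper's computation), and the normalizations $s_a\d(t_an_a)$. None of this is carried out, the ``exact $1$-cochain term attached to $g$'' is never given a formula, and the assertion that the resulting $1$-form ``is the pullback by $\frakp_{\gamma,\frakDs}$ of a fixed form; since $\frakp_{\gamma,\frakDs}\equiv\rmid$, this form is zero'' is not coherent: the pullback of a fixed form under the identity is that form, not zero. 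What must actually be shown is that the summed wall contributions telescope into a difference of boundary terms attached to the two ends of the loop, which then cancel up to the $G^{>\ell}$ error.

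Two further points. First, the constant of integration: your justification (``$\tilde L$-contributions cancel in pairs by the loop condition'') is not how it works; each $y_{z_a}[t_an_a]$ equals $y^{t_an_a}(1+\cdots)$ with no constant term, so every term of the sum vanishes individually in the limit $\bfy\rightarrow\bfzero$, which is how the paper fixes the constant — and note that the derivative criterion alone does not exclude a constant term, so some such limit argument is indispensable. Second, since \eqref{eq:gseq1} holds only modulo $G^{>\ell}$, the endpoint cancellation holds only up to factors $1+h_i$ with $h_i\in\calF^{>\ell}$; the paper tracks carefully that differentiation loses one degree and integration regains it ($\partial S[\alpha]/\partial y_i\in\calF^{>\ell-1}_{\log}$, hence $S[\alpha]\in\calF^{>\ell}_{\log}$), and, in the Poisson/canonical-coordinate formulation, one also needs the algebraic independence of $\bfy$ on the small phase space, which fails for singular $B$ and is repaired by the principal extension. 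Your sketch addresses none of these, so as it stands it is a plausible plan in the right spirit rather than a proof.
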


Here we present a proof based on the classical mechanical method 
employed in \cite{Gekhtman16} with some modification.
(An alternative proof will be given later by Theorem \ref{thm:DI2}.)
First, let us give an outline of the method.
The basic observation therein and also here, which is originated in \cite{Fock07},  is that the action of $\Psi[n]$ in \eqref{eq:ymut1}
(i.e., a mutation)
is described by a Hamiltonian system
with a log-canonical Poisson bracket
\begin{align}
\label{eq:yyP1}
\{y^n, y^{n'}\}_{\mathrm{P}}:= \{n,n'\} y^n y^{n'},
\quad
\{e_i,e_j\}=\d_i^{-1} b_{ij}
\end{align}
and a Hamiltonian
\begin{align}
\label{eq:Hn1}
\scrH[n]:=\mathrm{Li}_2(-y^n).
\end{align}
Here, we change the normalizations
of \eqref{eq:yyP1} and \eqref{eq:Hn1} 
from the convention in \cite{Gekhtman16}.
Indeed,
\begin{align}
\label{eq:Hy1}
\{\scrH[n], y^{n'}\}_{\mathrm{P}}=-\{n, n'\} y^{n'} \log(1+y^n),
\end{align}
which is the infinitesimal (or log) form of the action of $\Psi[n]^{-1}$
in \eqref{eq:ymut1}.
We also note that the calculation is essentially the same as
\eqref{eq:ymut1}.
The main ingredient of the method in  \cite{Gekhtman16} is the \emph{canonical coordinates} $\bfu=(u_1,\dots,u_r)$ and $\bfp=(p_1,\dots,p_r)$
satisfying
\begin{align}
\{p_i,u_j\}_{\mathrm{P}}=\delta_{ij},
\quad
\{u_i,u_j\}_{\mathrm{P}}=\{p_i,p_j\}_{\mathrm{P}}=0.
\end{align}
Then,
the $y$-variables $\bfy$ are represented as
\begin{align}
\label{eq:ypw1}
y_i = \exp((\d_i^{-1} p_i + w_i )/\sqrt{2}),
\quad
w_i=\sum_{j=1}^r b_{ji} u_j.
\end{align}
In the phase space $\tilde M\simeq \bbR^{2r}$ with the canonical coordinates $(\bfu, \bfp)$,
the point $(\overline\bfu,\overline\bfp)$ moves to  $(\overline\bfu',\overline\bfp')$
under the time-one flow of $\scrH[n]$ as
\begin{align}
\label{eq:ou1}
\overline u_i' &=
\overline u_i - \frac{1}{\sqrt{2}} n_i \d_i^{-1} \log (1+\overline y^{n}),
\\
\label{eq:op1}
\overline p_i' &=
\overline p_i + \frac{1}{\sqrt{2}} 
\biggl(\sum_{j=1}^r n_j b_{ij}\biggr) \log (1+\overline y^{n}),
\end{align}
where $\overline y_i = \exp((\d_i^{-1} \overline p_i + \overline w_i )/\sqrt{2})$.
We concentrate on the subspace (called the \emph{small phase space}) $\tilde M_0$ of $\tilde M$ defined
by $\d_i^{-1}p_i = w_i$ ($i=1,\dots,r$).
Then,
the modified Rogers dilogarithm
\begin{align}
\scrL[n]:=\tilde L ( y^n).
\end{align}
appears as the ``Lagrangian''.
Moreover, it is constant under the above  time-one flow
because $\{\scrH[n],y^n\}_{\mathrm{P}}=0$.
We regard $\scrL[n]$ as a function of $\overline\bfu$
and consider its infinitesimal variation $\delta \scrL[n] $ by $\overline\bfu +\delta\overline\bfu$.
Then, the following formula holds.

\begin{lem}
[{cf.  \cite[Lemma 6.4]{Gekhtman16}}]
\label{lem:dL1}
In the small phase space $\tilde M_0$, we have
\begin{align}
\label{eq:LPu1}
\delta \scrL[n] =
\sum_{i=1}^r \overline p'_i \delta \overline u'_i
-
\sum_{i=1}^r \overline p_i \delta \overline u_i.
\end{align}
\end{lem}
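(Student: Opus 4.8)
The plan is to verify Lemma~\ref{lem:dL1} by a direct computation of the total differential of $\scrL[n]=\tilde L(y^n)$, restricted to the small phase space, and then to recognize the result as the difference of the two canonical one-forms $\sum_i \overline p_i'\,\delta\overline u_i'$ and $\sum_i \overline p_i\,\delta\overline u_i$. First I would write $y^n=\exp\big((\sum_i n_i\d_i^{-1}p_i + \sum_i n_i w_i)/\sqrt 2\big)$ and, on $\tilde M_0$ where $\d_i^{-1}p_i=w_i$, this becomes $y^n=\exp(\sqrt 2\sum_i n_i w_i)$, a function of $\overline\bfu$ alone through $\overline w_i=\sum_j b_{ji}\overline u_j$. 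Using the expansion \eqref{eq:modL1}, namely $\tilde L'(x) = \tfrac12\{\log(1+x)/x - \log x/(1+x)\}$, I would compute $\delta\scrL[n]$ via the chain rule; the factor $\delta(y^n)/y^n = \delta\log(y^n)$ combines cleanly with $\tilde L'(x)$ to produce a term linear in $\log(1+\overline y^n)$ plus a term linear in $\log\overline y^n$. The point is that $\log\overline y^n=\sqrt 2\sum_i n_i\overline w_i$ is itself (a constant multiple of) the "kinetic" piece, so that piece reorganizes into $\sum_i \overline p_i\,\delta\overline u_i$ up to something that also appears on the primed side.

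The core algebraic step is then to match this against $\sum_i \overline p_i'\,\delta\overline u_i' - \sum_i \overline p_i\,\delta\overline u_i$ using the explicit flow \eqref{eq:ou1}--\eqref{eq:op1}. I would substitute $\overline u_i' = \overline u_i - \tfrac{1}{\sqrt 2} n_i\d_i^{-1}\log(1+\overline y^n)$ and $\overline p_i' = \overline p_i + \tfrac{1}{\sqrt 2}(\sum_j n_j b_{ij})\log(1+\overline y^n)$, expand the product $\overline p_i'\,\delta\overline u_i'$, and collect terms. Note that on $\tilde M_0$ we have $\overline p_i=\d_i\overline w_i=\d_i\sum_j b_{ji}\overline u_j$, so $\overline p_i$ and the $b_{ij}$-combination appearing in $\overline p_i'$ are closely tied; the skew-symmetrizability $\d_i^{-1}b_{ij}=-\d_j^{-1}b_{ji}$ (equivalently $\{e_i,e_j\}=\d_i^{-1}b_{ij}$ is skew) is what makes the cross terms telescope. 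The invariance $\{\scrH[n],y^n\}_{\mathrm P}=0$, i.e.\ $\sum_{i,j}n_i b_{ij} n_j=0$ by skew-symmetry, kills the would-be quadratic-in-$\log(1+\overline y^n)$ contribution, which is essential for the identity to hold with no leftover.

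The main obstacle I anticipate is bookkeeping: keeping straight the three kinds of indices and the $\sqrt 2$ normalizations while distinguishing "on $\tilde M_0$" substitutions (valid pointwise) from relations that must be respected under the variation $\delta$. In particular, $\tilde M_0$ is cut out by $\d_i^{-1}p_i=w_i$, so $\delta\overline p_i$ and $\delta\overline u_j$ are not independent on $\tilde M_0$; one must decide whether to treat $\delta$ as the ambient differential pulled back to $\tilde M_0$ (so $\delta p_i = \d_i\sum_j b_{ji}\delta u_j$ there) or to parametrize $\tilde M_0$ by $\overline\bfu$ from the start. I would take the latter route — treat everything as a function of $\overline\bfu$ — which makes $\delta\scrL[n]$ unambiguous and reduces the claim to an identity between two explicit $r$-forms in $d\overline u_1,\dots,d\overline u_r$ whose coefficients I then match term by term, using skew-symmetry at the two places indicated. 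A secondary subtlety is that $\overline u_i'$, as given by \eqref{eq:ou1}, depends on $\overline\bfu$ both explicitly and through $\overline y^n$, so $\delta\overline u_i'$ picks up a $\delta\log(1+\overline y^n)$ term; this is expected and is precisely what supplies the "$\tilde L$ versus $L$" discrepancy between the naive guess and the correct one-form difference.
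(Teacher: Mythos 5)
Your proposal follows essentially the same route as the paper's proof: parametrize the small phase space $\tilde M_0$ by $\overline\bfu$, compute $\delta\scrL[n]$ from \eqref{eq:modL1} by the chain rule, substitute the time-one flow \eqref{eq:ou1}--\eqref{eq:op1} into $\sum_i\overline p_i'\delta\overline u_i'-\sum_i\overline p_i\delta\overline u_i$, and match the resulting terms, with the $\log(1+\overline y^n)$-quadratic cross term killed by skew-symmetry and the $\log\overline y^n$ term supplied by the constraint $\d_i^{-1}\overline p_i=\overline w_i$. One small correction: the vanishing sum is $\sum_{i,j}n_i\,\d_i^{-1}b_{ij}\,n_j=\{n,n\}=0$ (skew-symmetry of $\d_i^{-1}b_{ij}$), not $\sum_{i,j}n_i b_{ij}n_j$, which need not vanish when $B$ is only skew-symmetrizable.
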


The  formula is identical to the one in \cite[Lemma 6.4]{Gekhtman16}.
However,
the proof therein is  applicable only when $n$ is a $c$-vector
of the cluster pattern for $B$.
So, we give a different line of  a proof.

\begin{proof}
By \eqref{eq:modL1}, the LHS is written as
\begin{align}
\label{eq:dLn1}
\delta
\scrL[n]
=\frac{1}{\sqrt{2}} 
\biggl(\, \sum_{i,j=1}^r n_j b_{ij} \delta \overline u_i\biggr)
\biggl\{
\log(1+\overline y^{n})
-
\frac{  \overline y^{n} \log \overline y^{n}}
{1+\overline y^{n}}
\biggr\}.
\end{align}
By \eqref{eq:ou1}, we have
\begin{align}
\label{eq:ou2}
\delta \overline u_i' &=
\delta \overline u_i - \frac{1}{\sqrt{2}} n_i \d_i^{-1}
\biggl(\, \sum_{j,k=1}^r n_j b_{kj} \delta \overline u_k\biggr)
 \frac{ \overline y^{n}}{1+\overline y^{n}}.
\end{align}
Let us write \eqref{eq:op1} and \eqref{eq:ou2} as
$
\overline p_i' =
\overline p_i + A_i
$
and
$
\delta \overline u_i' =
\delta \overline u_i - B_i
$.
Then, the RHS of \eqref{eq:LPu1} is given by
\begin{align}
\label{eq:pusum1}
\sum_{i=1}^r  A_i  \delta \overline u_i
-
\sum_{i=1}^r    \overline p_i B_i 
-
\sum_{i=1}^r  A_i  B_i.
\end{align}
They are calculated as
\begin{align}
\sum_{i=1}^r  A_i  \delta \overline u_i
&=
\frac{1}{\sqrt{2}}
\biggl(\, \sum_{i, j=1}^r n_j b_{ij}  \delta \overline u_i \biggr) \log (1+\overline y^{n}),
\\
\sum_{i=1}^r    \overline p_i B_i 
&=
\frac{1}{\sqrt{2}}
\biggl(\, \sum_{i, j=1}^r n_j b_{ij}  \delta \overline u_i \biggr) 
\frac{  \overline y^{n} \log \overline y^{n}}
{1+\overline y^{n}},
\\
\sum_{i=1}^r  A_i  B_i
&=
\frac{1}{{2}}
\biggl(\, \sum_{i, j=1}^r n_j b_{ij}  n_i \d_i^{-1}  \biggr) 
\biggl(\, \sum_{i, j=1}^r n_j b_{ij}  \delta \overline u_i \biggr) 
\frac{\overline y^{n} \log (1+ \overline y^{n})}{1+\overline y^{n}}
=0,
\end{align}
where the last equality is due to the skew-symmetry of $\d_i^{-1}b_{ij}$.
Thus, the equality \eqref{eq:LPu1} holds.
\end{proof}

Now we are ready to prove Theorem \ref{thm:DI0}.
\begin{proof}
[Proof of Theorem \ref{thm:DI0}]

We prove \eqref{eq:DI1} 
based on
Lemma  \ref{lem:dL1} .
\par
(a). Assume that $B$ is nonsingular.
Then, the $y$-variables $\bfy$ represented in \eqref{eq:ypw1}
are algebraically independent on the small phase space $\tilde M_0$.

We consider the Hamiltonian system in the small phase space $\tilde M_0$
for the time span $[0,P]$, where $P$ is the one in \eqref{eq:DI1}.
The Hamiltonian for the time span $[a,a+1]$ ($a=0,\dots,P$) is given by
\begin{align}
\scrH_a=
\epsilon_a s_a
\d(t_a n_a)
 \mathrm{Li}_2( -y^{t_a n_a}).
\end{align}
Accordingly, for a trajectory $\alpha=(\overline\bfu(t), \overline\bfp(t))_{t\in [0,j]}$
in the small phase space $\tilde M_0$, we consider the quantity
\begin{align}
\label{eq:Salpha1}
S[\alpha]=\sum_{a=0}^{P-1} \scrL_a,
\quad
\scrL_a  = 
\epsilon_a s_a
\d(t_a n_a)
\tilde L( \overline y(a)^{t_a n_a}),
\end{align}
where 
\begin{align}
\overline y_i(a) =\exp( \sqrt{2} \d_i^{-1} \overline p_i (a) )=\exp\biggl( \sqrt{2}
\sum_{j=1}^r b_{ji} \overline u_j(a)\biggr).
\end{align}
For the $i$th coordinate function $y_i$ at time 0, let
$y'_i$  be the one at time $P$ after the above time development.
Then,
by \eqref{eq:Hy1}, 
it is
 described as a function of $\bfy $
by 
\begin{align}
\label{eq:ypy1}
y'_i = \frakp_{\gamma, \frakD_{\fraks,\ell}}^{-1} (y_i),
\end{align}
where
\begin{align}
\frakp_{\gamma, \frakD_{\fraks,\ell}}^{-1}=
\Psi[t_0 n_0]^{- \epsilon_0 s_0 \d(t_0 n_0)}
\cdots
\Psi[t_{P-1} n_{P-1}]^{- \epsilon_{P-1} s_{P-1} \d(t_{P-1} n_{P-1})}.
\end{align}
Note that the dilogarithm elements act in the opposite order 
along the time development on
the coordinate functions.

For simplicity, suppose that equality for $ \frakp_{\gamma, \frakD_{\fraks,\ell}}$ in 
\eqref{eq:gseq1} is the  exact one \emph{without} modulo $G^{>\ell}$.
Then, by  \eqref{eq:ypy1}, we have
\begin{align}
\label{eq:yiy1}
y'_i = y_i.
\end{align}
It follows form Lemma \ref{lem:dL1} that $\delta S[\alpha]=0$
under any infinitesimal deformation $\alpha + \delta \alpha$.
This implies that $S[\alpha]$ is constant with respect to $\alpha$.
Moreover, it is easy to show that the constant is zero
by taking the limit $ \bfy \rightarrow \bf0$.
On the other hand, 
$S[\alpha]$ can be viewed as a formal  Puiseux 
series in $\bfy=\overline\bfy(0)$ with $\log$ factor.
More explicitly, $\overline y(a)^{t_a n_a}$ in \eqref{eq:Salpha1}
is replaced with
\begin{align}
\label{eq:Psiact1}
\begin{split}
&\
 \Psi[t_0 n_0]^{-\epsilon_0 s_0 \delta (t_0 n_0)}
\cdots
\Psi[t_{a-1} n_{a-1}]^{-\epsilon_{a-1} s_{a-1} \delta (t_{a-1} n_{a-1})}
(y^{t_a n_a})
\\
=&\ 
\frakp_{\gamma_a, \frakD_{\fraks,\ell}}^{-1}(y^{t_a n_a}),
\end{split}
\end{align}
where $\gamma_a$ is the subpath of $\gamma$  from
the base point of $\gamma$ to $z_a$.
Take any admissible path $\gamma_0$ 
from the base point of $\gamma$ to a point in $\mathrm{Int}(\calC^+)$
and apply $ \frakp_{\gamma_0, \frakD_{\fraks,\ell}}$ to $S[\alpha]$.
Then, we obtain the desired formula \eqref{eq:DI1}.

Now we consider the general case in \eqref{eq:gseq1}.
The equality \eqref{eq:yiy1} is replaced with
\begin{align}
\label{eq:yy1}
y'_{i}=
y_{i} (1+ h_i(\bfy))
\end{align}
for some $h_i(\bfy)\in \calF^{>\ell}$.
Then, again by  Lemma \ref{lem:dL1},
under any infinitesimal variation $\alpha + \delta \alpha$,
\begin{gather}
\delta S[\alpha]
=\sum_{i=1}^r (\overline p'_i \delta \overline u'_i -  \overline p_i \delta \overline u_i),
\\
\label{eq:pu1}
\d_i^{-1}\overline p_i= \sum_{j=1}^r b_{ji} \overline u_j= \frac{1}{\sqrt{2}}\log \overline y_i,
\quad
\d_i^{-1}\overline p'_i= \sum_{j=1}^r b_{ji} \overline u'_j= \frac{1}{\sqrt{2}}\log \overline y'_i,
\end{gather}
where we set $\overline p_i = \overline p_i(0)$ and 
$\overline p'_i = \overline p'_i(P)$, etc.
From now on,
we view $\overline p_i$, $\overline p'_i$, $\overline u_i$, $\overline u'_i$
as formal  Puiseux series in $\bfy=\overline\bfy(0)$ with $\log$ factor as before,
and also we omit the bars, for simplicity.
By \eqref{eq:yy1} and \eqref{eq:pu1}, we can write them as
\begin{align}
p'_i- p_i= f_i,
\quad
 u'_j-   u_j=g_i
 \quad
 (f_i,g_i
\in \calF^{>\ell}).
\end{align}
Then, we have
\begin{align}
\begin{split}
\quad \ \delta S[\alpha]
&=\sum_{i=1}^r \biggl((p_i + f_i) \biggl(\delta u_i + \sum_{j=1}^r \frac{\partial g_i }{\partial y_j} 
\frac{\partial y_j }{\partial u_i}\delta u_i
\biggr) -  p_i \delta u_i\biggr)
\\
&=\sum_{i=1}^r ((\log y_i) \tilde f_i + \tilde g_i)\delta u_i
\quad
(\tilde f_i, \tilde g_i\in \calF^{>\ell})
\\
&=\sum_{i=1}^r ((\log y_i) \hat f_i + \hat g_i) \delta y_i
\quad
(\hat f_i, \hat g_i\in \calF^{>\ell-1}).
\end{split}
\end{align}
It follows that
\begin{align}
\frac{\partial S[\alpha]}{\partial y_i} \in \calF_{\log}^{>\ell-1}.
\end{align}
Therefore, we have
\begin{align}
 S[\alpha]\in \calF_{\log}^{>\ell},
 \end{align}
 where the constant term is shown to be zero as before.
 Then, applying  $ \frakp_{\gamma_0, \frakD_{\fraks,\ell}}$
 to $S[\alpha]$, we obtain the desired formula \eqref{eq:DI1}.
 \par
 (b).
When $B$ is singular, 
we have
the problem that $\bfy$ represented in $\eqref{eq:ypw1}$
are not algebraically independent on the small phase space $\tilde M_0$.
To remedy it, we apply the standard principal extension technique \cite{Fomin07,Gross14}.
Namely, let $\tilde N= N \oplus M^{\circ}$. We extend the skew-symmetric bilinear form $\{\cdot,
\cdot\}$ on $N$ to the one on $\tilde N$ as
\begin{align}
\{(n,m),(n',m')\}:=\{n,n'\} + \langle n',m\rangle - \langle n, m'\rangle.
\end{align}
Then, we extend the $y$-representation  \eqref{eq:Xnaction1} of $G$ 
 to the one  (the \emph{principal $y$-representation})
 on $\bbQ[[\tilde \bfy]]$ ($\tilde \bfy=( y_1,\dots,  y_{2r})$)  by
 \begin{align}
\label{eq:Xnaction2}
X_n(\tilde y^{(n',m')})= \{(n,0),(n',m')\} \tilde y^{(n'+n,m')}.
\end{align}
 Accordingly, we extend the canonical coordinates $\bfu$ and $\bfp$ to $\tilde\bfu=(u_1,\dots,u_{2r})$ and $\tilde \bfp=(p_1,\dots,p_{2r})$
 to express $\tilde y_i$
 in the same way as \eqref{eq:ypw1},
 where the matrix $B$ is replaced with  the principally extended matrix of $B$
 \begin{align}
  \tilde B=
 \begin{pmatrix}
B& -I
\\
I & O
 \end{pmatrix}
 .
 \end{align}
 Then, in particular, the subvariables $ \bfy=( y_1,\dots,  y_{r})$ are now algebraically independent
 on the small phase space for $(\tilde \bfu, \tilde \bfp)$.
 After this, the proof of (a) is applicable.
 \end{proof}

Now we take the limit $\ell\rightarrow \infty$.
We parametrize the intersection of $\gamma$ and
the walls in $\frakDs$ as $z_a$ ($a \in J$) by a countable and totally ordered set $J$
so that $\gamma$ crosses $z_a$ earlier than $z_{a'}$ only if $a<a'$.
Then, the consistent relation is presented by the infinite  product
in the increasing order in $J$ from right to left,
\begin{align}
\label{eq:gseq2}
\frakp_{\gamma, \frakD_{\fraks}}=
\prod_{a\in J}^{\leftarrow}
\Psi[t_a n_a]^{\epsilon_a s_a \d(t_a n_a)}
= \rmid.
\end{align}

Thus, we obtain the first main theorem of the paper.
\begin{thm}
\label{thm:DI1}
For any admissible loop $\gamma$ with the consistent relation
\eqref{eq:gseq2},
the following identity holds
as a formal   Puiseux series in  {$\bfy$} with $\log$ factor:
\begin{align}
\label{eq:DI2}
\sum_{a\in J}
\epsilon_a s_a
\d(t_a n_a)
\tilde L( y_{z_a}[t_an_a])
=0.
\end{align}
\end{thm}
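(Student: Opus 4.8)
The plan is to deduce the infinite identity \eqref{eq:DI2} from the family of reduced identities \eqref{eq:DI1} proved in Theorem~\ref{thm:DI0}, by taking the limit $\ell\to\infty$ and tracking everything at the level of formal Puiseux series with $\log$ factor. First I would fix a countable totally ordered index set $J$ parametrizing the crossings $z_a$ of $\gamma$ with $\mathrm{Supp}(\frakDs)$, compatibly with the order of intersection, as in \eqref{eq:gseq2}; this is legitimate because, by the finiteness condition on a scattering diagram, only finitely many walls contribute nontrivially modulo $G^{>\ell}$ for each $\ell$, so the set of crossings that matter is at most countable and the infinite product $\prod^{\leftarrow}_{a\in J}\Psi[t_an_a]^{\epsilon_a s_a\d(t_an_a)}$ is well-defined in $G$ and equals $\rmid$ by consistency of $\frakDs$.

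Next I would observe that for each degree $\ell$ the reduction $\frakD_{\fraks,\ell}$ retains precisely those walls $\bfw_a$ with $\pi_\ell(g_a)\neq\rmid$, and these form a finite subset $J_\ell\subset J$; moreover $J_\ell\subseteq J_{\ell'}$ for $\ell\le\ell'$ and $\bigcup_\ell J_\ell=J$. Theorem~\ref{thm:DI0} applied to $\gamma$ at level $\ell$ then gives
\begin{align}
\sum_{a\in J_\ell}
\epsilon_a s_a\,\d(t_a n_a)\,\tilde L\bigl(y_{z_a}[t_an_a]\bigr)
\equiv 0
\mod \calF^{>\ell}_{\log}.
\end{align}
The key point is that the $y$-variable $y_{z_a}[t_an_a]$ appearing here is exactly the one defined in \eqref{eq:y1} for the full CSD $\frakDs$: the weakly admissible curve from $z_a$ to $\mathrm{Int}(\calC^+)$ crosses only finitely many walls modulo $G^{>\ell}$, and any wall element not already in $\frakD_{\fraks,\ell}$ contributes to $\frakp$ only in degrees $>\ell$, hence affects $y_{z_a}[t_an_a]$ only modulo $\calF^{>\ell}$; since $\tilde L$ has the expansion \eqref{eq:Lexp1} with lowest-degree term of degree $1$ in its argument, a perturbation of the argument by an element of $\calF^{>\ell}$ perturbs $\tilde L$ by an element of $\calF^{>\ell}_{\log}$. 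Therefore each summand computed in $\frakD_{\fraks,\ell}$ agrees with the summand computed in $\frakDs$ modulo $\calF^{>\ell}_{\log}$, and the reduced identity becomes
\begin{align}
\sum_{a\in J}
\epsilon_a s_a\,\d(t_a n_a)\,\tilde L\bigl(y_{z_a}[t_an_a]\bigr)
\equiv 0
\mod \calF^{>\ell}_{\log},
\end{align}
now with the sum over all of $J$, where the tail $\sum_{a\in J\setminus J_\ell}$ lies in $\calF^{>\ell}_{\log}$ automatically (its terms have arguments whose lowest nonvanishing degree exceeds $\ell$, since $\pi_\ell(g_a)=\rmid$ forces $t_an_a$ to have degree $>\ell$ or the relevant wall contribution to vanish through order $\ell$).

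Finally, since the above congruence holds for every $\ell$ and $\bigcap_\ell \calF^{>\ell}_{\log}=\{0\}$, the formal Puiseux series $\sum_{a\in J}\epsilon_a s_a\d(t_an_a)\tilde L(y_{z_a}[t_an_a])$ has all coefficients zero, which is precisely \eqref{eq:DI2}; one should also note that this sum is a well-defined element of the ring of formal Puiseux series with $\log$ factor, because for each fixed degree only finitely many summands contribute, by the same finiteness argument. The main obstacle I anticipate is the bookkeeping in the step that identifies the level-$\ell$ summand with the full-CSD summand modulo $\calF^{>\ell}_{\log}$: one must check carefully that passing from $\frakD_{\fraks,\ell}$ to $\frakDs$ in the path-ordered product defining $y_{z_a}[t_an_a]$ introduces only higher-degree corrections, which requires combining the finiteness condition, the form \eqref{eq:gt1} of wall elements, and the degree-$1$ leading behavior of $\tilde L$; everything else is a routine passage to the inverse limit.
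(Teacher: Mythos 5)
Your proposal is correct and follows essentially the same route as the paper, whose proof of Theorem~\ref{thm:DI1} is simply to take the limit $\ell\to\infty$ of the reduced identity \eqref{eq:DI1} from Theorem~\ref{thm:DI0}. Your write-up just makes explicit the bookkeeping (the nested finite sets $J_\ell$, the agreement of $y_{z_a}[t_an_a]$ modulo $\calF^{>\ell}_{\log}$, the tail estimate, and $\bigcap_\ell\calF^{>\ell}_{\log}=\{0\}$) that the paper leaves implicit.
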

\begin{proof}
This is immediately obtained by taking the limit $\ell\rightarrow \infty$
of \eqref{eq:DI1}.
\end{proof}
We call the identity \eqref{eq:DI2}, together with its reduction \eqref{eq:DI1} at degree $\ell$,
the \emph{dilogarithm identity} (DI) associated with a loop $\gamma$ in $\frakDs$.

\section{Construction and reduction of DIs by pentagon identity}

\subsection{Construction and reduction of CSD by pentagon relation}
Let us briefly review the construction of
 a CSD $\frakDs$ 
 satisfying the property in Theorem \ref{thm:CSD1} (b) 
 by \cite{Gross14, Nakanishi22a}.
 
Let $\Gamma$ be a fixed data of rank 2.
We say that a product of dilogarithm elements $\Psi[n]^c$ ($n\in N^+$, $c\in \bbQ_{>0}$)
 is \emph{ordered} (resp. \emph{anti-ordered})
if, for  any adjacent pair $\Psi[n']^{c'}\Psi [n]^{c}$,
$\{n',n\}\leq 0$ (resp. $\{n',n\}\geq 0$) holds.

The following is a key lemma which we use in the construction of the above mentioned CSD.

\begin{prop}[{Ordering lemma, \cite[Prop.\ III.5.4]{Nakanishi22a}}]
\label{prop:rank21}
Let $\fraks$ be a seed for a  fixed data $\Gamma$ of rank 2.
Let 
\begin{align}
\label{3eq:init1}
C^{\mathrm{in}}=
\Psi[t'_j n'_j]^{s'_j  \d(t'_j n'_j)}
\cdots
\Psi[t'_1n'_1]^{s'_1\d(t'_1 n'_1)}
\quad
(n'_a\in N_{\rmpr}^+, \ s'_a,t'_a \in \bbZ_{>0}).
\end{align}
be any finite anti-ordered product.
 Then, $C^{\mathrm{in}}$
equals to a (possibly infinite) ordered product $C^{\mathrm{out}}$ of 
factors  of the same form
\begin{align}
\label{3eq:gpos2}
\Psi[t_a n_a]^{s_a \d(t_a n_a) }
\quad
(n_a\in N_{\rmpr}^+, \ s_a,t_a \in \bbZ_{>0}).
\end{align}
Moreover,
the above relation $C^{\mathrm{in}}=C^{\mathrm{out}}$ is obtained from
a trivial relation $C^{\mathrm{in}}=C^{\mathrm{in}}$ by
 applying  the relations
in Proposition \ref{prop:pent1}
possibly infinitely many times.
\end{prop}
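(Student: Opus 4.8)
The plan is to transform the word $C^{\mathrm{in}}$ into an ordered word by a sequence of applications of the commutation relation \eqref{eq:com1} and the pentagon relation \eqref{eq:pent1}, organised along the degree filtration of $G$ so that only finitely many of these moves affect each fixed degree. Concretely, for each $\ell$ I would produce, by finitely many such moves, a finite word $C^{(\ell)}$ all of whose factors have the form \eqref{3eq:gpos2} and which is ordered modulo $G^{>\ell}$; I would then arrange that the move sequence for $\ell+1$ refines that for $\ell$ (so $C^{(\ell+1)}$ agrees with $C^{(\ell)}$ modulo $G^{>\ell}$), and set $C^{\mathrm{out}}:=\lim_\ell C^{(\ell)}$. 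The (possibly infinite) list of moves used is then exactly the reduction of $C^{\mathrm{in}}=C^{\mathrm{out}}$ to the trivial relation by \eqref{eq:com1}--\eqref{eq:pent1} required by the statement. The degenerate cases are immediate: if $\{n,n'\}=0$ for all the normal vectors present --- in particular if all factors are parallel --- everything commutes by \eqref{eq:com1}, and if $j\le 1$ there is nothing to do.

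The rewriting rule has two parts. First, consecutive factors with a common primitive normal vector lie in one abelian subgroup $G^{\parallel}_{n_0}$, hence may be permuted freely by \eqref{eq:com1} and, modulo $G^{>\ell}$, recombined into finitely many factors of the form \eqref{3eq:gpos2}; this is the ``merge parallels'' step. Second, if the word is still not ordered modulo $G^{>\ell}$, in rank $2$ there is then an adjacent pair $\Psi[m_2]^{e_2}\Psi[m_1]^{e_1}$ with $m_1,m_2$ non-parallel and $c:=\{m_2,m_1\}>0$ (a ``defect''). One resolves it by \eqref{eq:pent1}: the point is that the exponents can be split into the pentagon-required $1/c$-powers because of the integrality $\d(m)\{m,N\}\subset\bbZ$ built into the fixed data (so that $e_i\,c\in\bbZ_{>0}$ for factors \eqref{3eq:gpos2}), and applying \eqref{eq:pent1} to the innermost $\Psi[m_2]^{1/c}\Psi[m_1]^{1/c}$ replaces the defect by the locally ordered triple $\Psi[m_1]^{1/c}\Psi[m_1+m_2]^{1/c}\Psi[m_2]^{1/c}$, the new normal vector $m_1+m_2$ having degree $\deg m_1+\deg m_2$, \emph{strictly larger} than $\deg m_1$ and $\deg m_2$. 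Iterating the two parts is the bubble sort.

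The bulk of the work is to show this procedure stabilises modulo $G^{>\ell}$ after finitely many moves. I would argue by induction on $\ell$ together with a well-founded complexity of the word visible modulo $G^{>\ell}$: first order modulo $G^{>\ell-1}$ by the inductive hypothesis, then observe that the residual disorder lies in degree $\ell$ and is killed by finitely many further moves, the decisive input being that each pentagon move introduces a normal vector of strictly larger degree, so freshly created directions cannot keep re-entering the range $\le\ell$. The stabilised word $C^{(\ell)}$ is ordered modulo $G^{>\ell}$, and all exponents created along the way ($1/c$ with $c>0$, and their integer sub-powers) are positive, so its exponents are positive; the remaining point --- that in the limit these are positive \emph{integer} multiples of the normalisation factors $\d(tn)$ --- I would obtain from the positivity in scattering-diagram theory: $C^{\mathrm{out}}$, regrouped by rays, presents a consistent scattering diagram, and by Theorem \ref{thm:CSD1}(b) (equivalently, the positivity of the Kontsevich--Soibelman factorisation in rank $2$) its wall elements are products $\Psi[tn]^{s\d(tn)}$ with $s,t\in\bbZ_{>0}$. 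I expect the main obstacle to be precisely this convergence argument: the termination at each degree, and, intertwined with it, maintaining throughout the rewriting that the exponents stay pentagon-compatible (for instance by only ever resolving a defect between slope-adjacent directions, so that the relevant skew-form values remain controlled by $\pm\{n_2,n_1\}$ of the originating pair).
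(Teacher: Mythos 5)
Your overall strategy --- a degree-by-degree bubble sort in which parallel adjacent factors are merged by \eqref{eq:com1}, anti-ordered adjacent pairs are resolved by \eqref{eq:pent1}, termination modulo $G^{>\ell}$ is ensured because each pentagon move creates a vector of strictly larger degree, and the full relation is the level-wise refinement limit --- is exactly the approach behind the paper's cited proof (the algorithm of \cite[Prop.~III.5.4, Algorithm~III.5.7]{Nakanishi22a}). However, you leave the two points that constitute the actual content of that proof unproven, and the patch you propose for one of them does not work as stated. The crucial gap is the exponent bookkeeping. Your integrality observation $\d(m_i)\{m_2,m_1\}\in\bbZ$ does let you split the \emph{initial} factors \eqref{3eq:init1} into $1/c$-units, but a pentagon move creates factors such as $\Psi[m_1+m_2]^{1/c}$ whose exponent is in general \emph{not} an integer multiple of $\d(m_1+m_2)$ (already for $\{e_2,e_1\}=2$, $\d\equiv 1$, one gets $\Psi[e_1+e_2]^{1/2}$). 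When such a factor later forms a defect with some $\Psi[m_3]^{e_3}$, you need both exponents to be positive integer multiples of $1/\{m_3,m_1+m_2\}$ in order to invoke \eqref{eq:pent1} at all; this divisibility is not automatic, and nothing in your proposal establishes it. So the rewriting procedure is not even well defined beyond the first generation of moves unless one proves, inductively along the algorithm, that after merging the per-ray exponents return to the form \eqref{3eq:gpos2} --- which is precisely the statement $s_a,t_a\in\bbZ_{>0}$ you were trying to avoid proving. Your fallback, citing Theorem~\ref{thm:CSD1}(b), does not close this: that theorem concerns the CSD with incoming walls $\mathrm{In}_{\fraks}$, not a consistent rank-2 diagram with arbitrary incoming data \eqref{3eq:init1}, so it would at best apply after an additional reduction plus the uniqueness of the ray-wise ordered factorization in $G$ (which you do not state); it constrains only the limit $C^{\mathrm{out}}$ and not the intermediate words on which the pentagon must be applied; and within this paper's architecture it is circular, since Construction~\ref{const:CSD1} and Theorem~\ref{thm:reduce1} derive the positive realization of Theorem~\ref{thm:CSD1}(b) \emph{from} Proposition~\ref{prop:rank21}.

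The termination claim is in better shape but is also only sketched: the useful observation is that once the word is ordered modulo $G^{>\ell-1}$, any remaining defect at level $\ell$ involves a degree-$\ell$ factor, which is central modulo $G^{>\ell}$ and can be transported by finitely many pentagon moves whose new vectors have degree $>\ell$; a naive ``defect count'' does not decrease (resolving $\Psi[m_2]^{e_2}\Psi[m_1]^{e_1}$ unit by unit recreates anti-ordered pairs in the same two directions), so you do need this centrality argument or an explicit well-founded measure, and you should say which. In summary: same route as the paper, but the heart of the cited proof --- the inductive verification that all wall elements stay in the form \eqref{3eq:gpos2} so that the pentagon relation remains applicable, together with a genuine termination measure at each degree --- is missing, and the appeal to CSD positivity cannot substitute for it.
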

The explicit algorithm of obtaining $C^{\mathrm{out}}$ from $C^{\mathrm{in}}$
is given in \cite[Algorithm III.5.7]{Nakanishi22a}.

We recall important notions for a scattering diagram \cite{Gross14}.

\begin{defn}[Parallel/perpendicular joint]
\par (a).
Let $\frakD$ be a scattering diagram.
For any pair of walls $\bfw_i=(\frakd_i,g_{i})_{n_i}$ ($i=1,2$), the intersection
of their supports $\frakj=\frakd_1\cap \frakd_2$
is called a \emph{joint} of $\frakD$ if $\frakj$
is a cone of codimension 2.
\par (b).
For any joint $\frakj$,
let
\begin{align}
\label{eq:Nj1}
N_{\frakj}:=
\{ n\in N \mid \langle n, z\rangle=0
\ (z\in \frakj)\},
\end{align}
which is the rank 2 sublattice of $N$.
Then, 
a joint $\frakj$ is \emph{parallel}
(resp.\ \emph{perpendicular}) if 
the skew-symmetric form $\{\cdot, \cdot\}$
restricted on $N_{\frakj}$ vanishes (otherwise).
\end{defn}

Based  on Proposition \ref{prop:rank21},
we present  the construction of a CSD $\frakDs$,
which was given  by \cite{Gross14} and modified  with
Proposition \ref{prop:rank21} by \cite{Nakanishi22a}.
The resulting CSD satisfies the property in Theorem \ref{thm:CSD1} (b).

\begin{const}
[{\cite[Appendix C.3]{Gross14}, \cite[Construction III.5.14]{Nakanishi22a}}]
\label{const:CSD1}
We construct scattering diagrams $\frakD_1\subset \frakD_2 \subset \cdots$ as below.
Then, a CSD $\frakDs$ in Theorem \ref{thm:CSD1} (b) 
is given by
\begin{align}
 \frakDs=
 \bigcup_{\l=1}^\infty \frakD_{\l}.
 \end{align}
\par
(1). We start with $\frakD_1=\mathrm{In}_{\fraks}$.
\par
(2). We construct $\frakD_{\l+1}$ from $\frakD_{\l}$ as follows.
For any \emph{perpendicular} joint
$\frakj
$
of $\frakD_{\l}$,
let $N_{\frakj}$ be the one in \eqref{eq:Nj1}.
Let $N_{\frakj}^+ :=N^+\cap N_{\frakj}$
and
 $N_{\frakj,\rmpr}^+ :=N^+_{\rmpr}\cap N_{\frakj}$.
   Then, there exists a unique pair $\tilde{e}_1, \tilde{e}_2\in N_{\frakj,\rmpr}^+$ such that
  $N_{\frakj}^+ \subset \bbQ_{\geq 0}   \tilde{e}_1+  \bbQ_{\geq 0}\tilde{e}_2$
  and $\{ \tilde{e}_2, \tilde{e}_1 \}>0$.
See Figure \ref{fig:2d1}.
Thus, $p^*(n)$ is in the ``second quadrant"
in $M_{\bbR}$ with respect to $\tilde{e}_1^{\perp}$ and $\tilde{e}_2^{\perp}$.
Then, by the construction of walls as described below,
all wall elements in $\frakD_{\l}$ has the form in  \eqref{eq:gt1}.
We say that a product of dilogarithm elements $\Psi[n]^c$ ($n\in N_{\frakj}^+$,
$c\in\bbQ_{>0}$)
 is \emph{ordered} (resp. \emph{anti-ordered})
if, for  any adjacent pair $\Psi[n']^{c'}\Psi [n]^{c}$,
$\{n',n\}\leq 0$ (resp. $\{n',n\}\geq 0$) holds.
We take the wall elements of all walls
which  contain $\frakj$
and lie in the second quadrant with respect to $\tilde{e}_1^{\perp}$ and $\tilde{e}_2^{\perp}$,
and we consider the anti-ordered product of them
\begin{align}
\label{eq:C0}
C^{\mathrm{in}}=\Psi[t'_{j'} n'_{j'}]^{s'_{j'}  \d(t'_{j'} n'_{j'})}
\cdots \Psi[t'_1n'_1]^{s'_1\d(t'_1 n'_1)}
\quad
(\deg(t'_a n'_a) \leq \ell),
\end{align}
where  $n'_a \in N_{\frakj,\rmpr}^+$ and   $s'_a, t'_a\in \bbZ_{>0}$.
Then, we apply Proposition  \ref{prop:rank21}
to $C^{\mathrm{in}}$ modulo $ G^{>\ell+1}$,
and obtain a finite ordered product
\begin{align}
\label{eq:C1}
C^{\mathrm{out}}(\ell)=
\Psi[t_j n_j]^{s_j  \d(t_j n_j)}
\cdots
\Psi[t_1n_1]^{s_1\d(t_1 n_1)}
\quad
(\deg(t_a n_a )\leq \ell+1)
\end{align}
such that 
\begin{align}
C^{\mathrm{in}}\equiv C^{\mathrm{out}}(\ell) \mod G^{>\ell+1}.
\end{align}
By the construction of walls which we describe below,
all factors in \eqref{eq:C1} with $\deg(t_a n_a)\leq \l$
already appear as wall elements in $\frakD_{\l}$.
Now, we add new walls
\begin{align}
\label{3eq:add2}
(\sigma(\frakj, -p^*(n_a)),
\Psi[t_a n_a ]^{s_a \d(t_a n_a)})_{n_a}
\end{align}
 to $\frakD_{\l}$ for all factors  in \eqref{eq:C1}
with $\deg(t_a n_a)= \l+1$.
The cone $\sigma(\frakj, -p^*(n_a))$,
which is generated by $\frakj$ and $-p^*(n_a)$,  is of codimension 1
due to the perpendicular condition $p^*(n_a)\not\in \bbR\frakj$.
Also,
it is outgoing because it is in the ``fourth quadrant" in $M_{\bbR}$
with respect to $\tilde{e}_1^{\perp}$ and $\tilde{e}_2^{\perp}$.
We do the procedure for all  perpendicular joints of $\frakD_{\l}$
to obtain $\frakD_{\l+1}$.
\end{const}

The consistency  around
parallel joints of the above constructed $\frakDs$ is not  obvious, but it was proved in
 \cite[Appendix C.3]{Gross14}.

 \begin{figure}
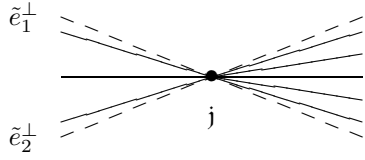

\centering
\leavevmode
\xy
(0,-5)*{\text{\small $\frakj$}};
(-25,8)*{\text{\small $ \tilde{e}_1^{\perp}$}};
(-25,-8)*{\text{\small $ \tilde{e}_2^{\perp}$}};
(0,0)*+{\bullet};
\ar@{--} (-20,8); (20,-8)
\ar@{--} (-20,-8); (20,8)
\ar@{-} (-20,6); (20,-6)
\ar@{-} (-20,0); (20,0)
\ar@{-} (-20,-6); (20,6)
\ar@{-} (0,0); (20,3)
\ar@{-} (0,0); (20,-3)
\endxy
\caption{Walls containing a perpendicular joint $\frakj$.}
\label{fig:2d1}
\end{figure}

The above construction of a CSD immediately
implies the following important result,
which is implicit in \cite[Appendix C.3]{Gross14},
and  described  explicitly in \cite{Nakanishi22a}.
\begin{thm}[{\cite[Theorem III.5.17]{Nakanishi22a}}]
\label{thm:reduce1}
For any admissible loop $\gamma$ for $\frakDs$,
the consistency relation $\frakp_{\gamma, \frakDs}=\rmid$
is reduced  to a trivial one
by applying
 the  relations in Proposition \ref{prop:pent1}
 possibly infinitely many times.
\end{thm}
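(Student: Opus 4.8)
The plan is to reduce the consistency relation around an arbitrary admissible loop to the consistency relations around \emph{small loops encircling single joints}, and then to recognize each of the latter as a relation already produced by Proposition~\ref{prop:rank21} in the course of Construction~\ref{const:CSD1}. First I would pass to a finite level: fix a degree $\ell$ and work modulo $G^{>\ell}$, so that the reduction $\frakD_{\fraks,\ell}$ has only finitely many walls and joints and $\frakp_{\gamma,\frakDs}\equiv \frakp_{\gamma,\frakD_{\fraks,\ell}}\mod G^{>\ell}$ (the omitted walls have elements in $G^{>\ell}$, which is normal). It then suffices to reduce $\frakp_{\gamma,\frakD_{\fraks,\ell}}\equiv \rmid$ of \eqref{eq:gseq1} to a trivial relation for every $\ell$, coherently in $\ell$, and to pass to the limit at the end.

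Next I would use the combinatorial topology of a finite scattering diagram in $M_{\bbR}$ (as in \cite[\S1]{Gross14} and \cite{Nakanishi22a}). Since $\mathrm{Sing}(\frakD_{\fraks,\ell})$ is a finite union of cones of codimension $\geq 2$, every admissible loop $\gamma$ based in $M_{\bbR}\setminus \mathrm{Supp}(\frakD_{\fraks,\ell})$ can be deformed, through admissible curves, into a concatenation of ``lassos'' $\beta_k^{-1}*\gamma_k*\beta_k$ (concatenation of paths), where each $\gamma_k$ is a small loop around a single joint $\frakj_k$ and $\beta_k$ is an admissible path from the base point. Crucially, each elementary deformation pushing a finger of the curve across one or more walls and back changes the path-ordered word only by inserting or deleting a palindromic block $g_m^{\epsilon_m}\cdots g_1^{\epsilon_1}g_1^{-\epsilon_1}\cdots g_m^{-\epsilon_m}$, which unwinds to $\rmid$ by repeated trivial cancellation; deformations keeping the crossing pattern fixed do not change the word at all. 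Hence
\begin{align}
\frakp_{\gamma,\frakD_{\fraks,\ell}} = \prod_k h_k\,\frakp_{\gamma_k,\frakD_{\fraks,\ell}}\,h_k^{-1},
\qquad h_k:=\frakp_{\beta_k,\frakD_{\fraks,\ell}},
\end{align}
is obtained using only trivial relations, and it remains to reduce each $\frakp_{\gamma_k,\frakD_{\fraks,\ell}}\equiv \rmid$ to a trivial relation.

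If $\frakj_k$ is a \emph{perpendicular} joint, the walls of $\frakD_{\fraks,\ell}$ containing $\frakj_k$ are precisely those entering Construction~\ref{const:CSD1} at this joint, and the consistency relation around the small loop $\gamma_k$ coincides, up to a cyclic rotation of the word and the orientation conventions for intersection signs, with the relation $C^{\mathrm{in}}\equiv C^{\mathrm{out}}(\ell)\mod G^{>\ell+1}$ of \eqref{eq:C0}--\eqref{eq:C1}. By Proposition~\ref{prop:rank21} this relation is obtained from the trivial relation $C^{\mathrm{in}}=C^{\mathrm{in}}$ by applying the relations of Proposition~\ref{prop:pent1} finitely many times at level $\ell$, so the same holds for $\frakp_{\gamma_k,\frakD_{\fraks,\ell}}\equiv \rmid$. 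If $\frakj_k$ is a \emph{parallel} joint, every wall containing $\frakj_k$ has normal vector in the rank-$2$ sublattice $N_{\frakj_k}$ of \eqref{eq:Nj1}, on which $\{\cdot,\cdot\}$ vanishes; hence by \eqref{eq:com1} all wall elements crossed by $\gamma_k$ mutually commute. Reordering the word by these commutation relations and combining factors with a common primitive direction via $\Psi[n]^{c}\Psi[n]^{c'}=\Psi[n]^{c+c'}$, the product collapses to $\prod_{n_0}\Psi[n_0]^{c_{n_0}}$; since consistency around parallel joints holds by \cite[Appendix~C.3]{Gross14}, each exponent $c_{n_0}$ vanishes and the relation is reduced to $\rmid=\rmid$ using only relations of Proposition~\ref{prop:pent1}.

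Finally I would organize the levels coherently: by Construction~\ref{const:CSD1} the passage from level $\ell$ to level $\ell+1$ only introduces new pentagon and commutation applications involving factors of degree $\ell+1$, so the level-$\ell$ reductions stabilize and their union gives a reduction of the infinite relation \eqref{eq:gseq2} to a trivial one by possibly infinitely many applications of Proposition~\ref{prop:pent1}. I expect the main obstacle to be the second step: making rigorous, in the style of \cite[\S1]{Gross14}, that an admissible loop can be deformed to a product of lassos around joints through moves that change the path-ordered word only by trivial relations, together with the bookkeeping needed to combine the countably many joint-contributions into a single coherent reduction as $\ell\to\infty$.
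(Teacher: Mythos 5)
Your proposal is correct and follows essentially the same route as the paper's proof: fix a degree $\ell$, reduce the consistency relation topologically to loops around joints of $\frakD_{\fraks,\ell}$, dispose of parallel joints with the commutation relation \eqref{eq:com1}, and recognize the relation at a perpendicular joint as $C^{\mathrm{in}}\equiv C^{\mathrm{out}}(\ell)$, which Proposition~\ref{prop:rank21} produces using only the relations of Proposition~\ref{prop:pent1}. Your lasso decomposition simply makes explicit what the paper compresses into ``by the topological reason,'' and your parallel-joint argument is an equivalent rephrasing of \eqref{3eq:triv1}.
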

\begin{proof}
For the reader's convenience,
we present the  proof of  \cite[Theorem III.5.17]{Nakanishi22a}.
Fix $\ell>0$, and consider the reduction $\frakD_{\fraks, \ell}$ at $\ell$.
By the topological reason,
any consistency relation is reduced to the consistency relations for admissible loops
around joints in  $\frakD_{\fraks,\ell}$.
For a parallel joint $\frakj$, which is a trivial case,
the consistency relation has the form
\begin{align}
\label{3eq:triv1}
\begin{split}
&\quad \
\Psi[t'_{j'} n'_{j'}]^{s'_{j'}  \d(t'_{j'} n'_{j'})}
\cdots
\Psi[t'_1n'_1]^{s'_1\d(t'_1 n'_1)}
\\
&
=
\Psi[t'_1n'_1]^{s'_1\d(t'_1 n'_1)}
\cdots
\Psi[t'_{j'} n'_{j'}]^{s'_{j'}  \d(t'_{j'} n'_{j'})},
\end{split}
\end{align}
where the LHS is an anti-ordered product.
By applying \eqref{eq:com1}, it is reduced to a trivial relation.
For a perpendicular joint $\frakj$, 
which is a nontrivial case,
the consistency relation has the form
\begin{align}
\label{eq:const2}
\begin{split}
&\quad \ \Psi[t'_{j'} n'_{j'}]^{s'_{j'}  \d(t'_{j'} n'_{j'})}
\cdots
\Psi[t'_1n'_1]^{s'_1\d(t'_1 n'_1)}
\\
&\equiv
\Psi[t_j n_j]^{s_j  \d(t_j n_j)}
\cdots \Psi[t_1n_1]^{s_1\d(t_1 n_1)}
\mod
G^{>{\l}},
\end{split}
\end{align}
where the left and right  hand sides are the ones in \eqref{eq:C0} and \eqref{eq:C1}, respectively.
The RHS  is obtained from the LHS
by Proposition \ref{prop:rank21}
(more precisely, by Algorithm III.5.7 of \cite{Nakanishi22a}),
which depends only on  the relations \eqref{eq:com1}
and  \eqref{eq:pent1}.
By applying the relations in the reverse way,
the  relation \eqref{eq:const2} is reduced to a trivial one.
\end{proof}

\subsection{Construction and reduction of DIs by pentagon identity}
\label{subsec:constDI1}
Based on Construction \ref{const:CSD1} and Theorem \ref{thm:reduce1},
 we will give parallel results for  DIs,
 where the role of the pentagon relation \eqref{eq:pent1} is played by
 the  pentagon identity \eqref{eq:pentL1} of the Rogers dilogarithm.
 
For this purpose, we reformulate the pentagon identity in
\eqref{eq:pentL1}
into a form which is closer to 
the  pentagon relation \eqref{eq:pent1}.

\begin{lem} Let $n_1, n_2\in N^+$.
 If $\{n_2,n_1\}=c$ $(c\in \bbQ, c\neq 0)$,
the following  pentagon identity holds:
\begin{align}
\label{eq:pentL2}
\begin{split}
\frac{1}{c}
\Psi[n_1 ]^{-1/c} (\tilde L (y^{n_2})) + \frac{1}{c} \tilde L (y^{n_1})
& =
\frac{1}{c} \Psi[n_2 ]^{-1/c} \Psi[n_1+n_2]^{-1/c}(\tilde L( y^{n_1}))
\\
&\quad\
+
\frac{1}{c} \Psi[n_2 ]^{-1/c} (\tilde L( y^{n_1+n_2}))
+
\frac{1}{c}
\tilde L(y^{n_2}).
\end{split}
\end{align}
\end{lem}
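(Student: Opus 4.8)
The plan is to obtain \eqref{eq:pentL2} from the scalar pentagon identity \eqref{eq:pentL1} by the substitution $y_1\mapsto y^{n_1}$, $y_2\mapsto y^{n_2}$, and then to reinterpret the five arguments that arise as actions of dilogarithm elements. First I would note that $\{n_2,n_1\}=c\neq 0$ forces $n_1,n_2$ to be $\bbQ$-linearly independent in $N$; hence the submonoid of $Q$ generated by $n_1,n_2$ is a free copy of $\bbZ_{\geq 0}^2$, and $y_1\mapsto y^{n_1}$, $y_2\mapsto y^{n_2}$ together with $\log y_i\mapsto \log y^{n_i}$ defines a (degree-filtered, in fact injective) homomorphism from the ring of formal Puiseux series in $y_1,y_2$ with $\log$ factor into the corresponding ring over $Q$. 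Since \eqref{eq:pentL1} is an identity in the former ring, it remains valid after this substitution.

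Next I would compute the relevant actions of dilogarithm elements from \eqref{eq:ymut1}, using the bilinearity and skew-symmetry of $\{\cdot,\cdot\}$, which give $\{n_1,n_2\}=-c$, $\{n_1+n_2,n_1\}=c$, $\{n_2,n_1\}=c$, $\{n_2,n_1+n_2\}=c$. A short calculation, using that $\rho_y$ acts by algebra automorphisms (so that applying $\Psi[n_1+n_2]^{-1/c}$ first and then $\Psi[n_2]^{-1/c}$ is handled termwise), yields
\begin{align*}
\Psi[n_1]^{-1/c}(y^{n_2})&=y^{n_2}(1+y^{n_1}),\\
\Psi[n_2]^{-1/c}(y^{n_1+n_2})&=y^{n_1+n_2}(1+y^{n_2})^{-1},\\
\Psi[n_2]^{-1/c}\Psi[n_1+n_2]^{-1/c}(y^{n_1})&=y^{n_1}(1+y^{n_2}+y^{n_1}y^{n_2})^{-1}.
\end{align*}
The three right-hand sides are exactly the images under $y_1\mapsto y^{n_1}$, $y_2\mapsto y^{n_2}$ of the three arguments $y_2(1+y_1)$, $y_1y_2(1+y_2)^{-1}$, $y_1(1+y_2+y_1y_2)^{-1}$ occurring in \eqref{eq:pentL1}; the remaining two arguments, $y_1$ and $y_2$, map to $y^{n_1}$ and $y^{n_2}$ and need no action.

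Finally, each $\rho_y(g)$ extends to an automorphism of the ring of formal Puiseux series with $\log$ factor via $g(\log y^{n'})=\log(g(y^{n'}))$; for a dilogarithm element this reads $\Psi[n](\log y^{n'})=\log y^{n'}+\{n,n'\}\log(1+y^n)=\log(\Psi[n](y^{n'}))$, which is immediate from \eqref{eq:ymut1}. Since $\tilde L(y^{n'})$ is the series \eqref{eq:Lexp1} in the single variable $y^{n'}$ together with its $\log$ factor, this gives the intertwining $g(\tilde L(y^{n'}))=\tilde L(g(y^{n'}))$ for every relevant $g$. Applying it with $g=\Psi[n_1]^{-1/c}$, $\Psi[n_2]^{-1/c}$ and $\Psi[n_2]^{-1/c}\Psi[n_1+n_2]^{-1/c}$ turns the substituted form of \eqref{eq:pentL1} termwise into $c$ times \eqref{eq:pentL2}, and dividing by $c$ completes the proof. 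The only step requiring any care is the compatibility of the $\log$ factors under the $\Psi$-action that makes $g(\tilde L(y^{n'}))=\tilde L(g(y^{n'}))$ hold; beyond that the argument is a routine verification, so I do not anticipate a genuine obstacle.
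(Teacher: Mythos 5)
Your proof is correct and follows essentially the same route as the paper: compute the three actions $\Psi[n_1]^{-1/c}(y^{n_2})$, $\Psi[n_2]^{-1/c}(y^{n_1+n_2})$, $\Psi[n_2]^{-1/c}\Psi[n_1+n_2]^{-1/c}(y^{n_1})$ from \eqref{eq:ymut1} and then substitute $y_1=y^{n_1}$, $y_2=y^{n_2}$ into \eqref{eq:pentL1}. The only difference is that you spell out what the paper leaves implicit, namely that the substitution is a well-defined homomorphism of formal Puiseux series with $\log$ factor and that the action intertwines with $\tilde L$ via $g(\tilde L(y^{n'}))=\tilde L(g(y^{n'}))$, which is a welcome clarification rather than a departure.
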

\begin{proof}
we have
\begin{align}
\Psi[n_1 ]^{-1/c} (y^{n_2})
&= y^{n_2}  (1+y^{n_1}),
\\
\Psi[n_2 ]^{-1/c} (y^{n_1+n_2})
&= y^{n_1+n_2}  (1+y^{n_2})^{-1},
\\
\begin{split}
\Psi[n_2 ]^{-1/c} \Psi[n_1+n_2]^{-1/c} (y^{n_1})
&=  \Psi[ n_2 ]^{-1/c} (y^{n_1}  (1+y^{n_1+n_2})^{-1})
\\
&=  y^{n_1} (1+y^{n_2})^{-1}
  (1+y^{n_1+n_2} (1+y^{n_2})^{-1})^{-1}\\
&=  y^{n_1} 
  ( 1+y^{n_2}+y^{n_1+n_2})^{-1}.
  \end{split}
\end{align}
Then, the identity \eqref{eq:pentL2}
is obtained from \eqref{eq:pentL1}
by setting $y_1=y^{n_1}$ and $y_2=y^{n_2}$.
\end{proof}

The overall factor $1/c$ was put so that  the identity \eqref{eq:pentL2} perfectly matches  (the log form of) the
corresponding pentagon relation \eqref{eq:pent1}.
Also, observe that action of dilogarithm elements in \eqref{eq:pentL2} 
is parallel to  the one in \eqref{eq:Psiact1}.

Thanks to the correspondence between \eqref{eq:pentL2} and \eqref{eq:pent1},
we  obtain a parallel result to 
Construction \ref{const:CSD1} and Theorem \ref{thm:reduce1} for the DIs.
This is the second main result of the paper.
\begin{thm}
\label{thm:DI2}
For any admissible loop $\gamma$ for $\frakDs$,
the associated DI in \eqref{eq:DI2} 
is constructed from and 
reduced to a trivial one
by applying the pentagon identity in \eqref{eq:pentL1}
possibly infinitely many times.
\end{thm}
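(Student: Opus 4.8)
The plan is to transport the reduction of Theorem~\ref{thm:reduce1} through a dictionary that turns each elementary rewriting of a path-ordered product into an application of the pentagon identity \eqref{eq:pentL1} (or of a trivial identity) on an associated sum of modified Rogers dilogarithms. To a finite word $W=\Psi[m_k]^{c_k}\cdots\Psi[m_1]^{c_1}$ (with $m_a\in N^+$, $c_a\in\bbQ$) I attach the formal Puiseux series with $\log$ factor
\begin{align}
\label{eq:Phidef}
\Phi(W):=\sum_{a=1}^{k} c_a\,
\rho_y\bigl((\Psi[m_{a-1}]^{c_{a-1}}\cdots\Psi[m_1]^{c_1})^{-1}\bigr)\bigl(\tilde L(y^{m_a})\bigr),
\end{align}
the prefactor of the $a$th term being the $\rho_y$-image of the inverse of the product of the letters crossed \emph{before} $\Psi[m_a]^{c_a}$. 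One first records the intertwining property $\rho_y(g)(\tilde L(y^m))=\tilde L(\rho_y(g)(y^m))$ for all $g\in G$ and $m\in N^+$, which follows at once from $\Psi[n]^c(y^m)=y^m(1+y^n)^{c\{n,m\}}$ together with $\Psi[n]^c(\log y_i)=\log y_i+c\{n,e_i\}\log(1+y^n)$; unwinding the definition of $y_{z_a}[t_an_a]$ then shows that $\rho_y(\frakp_{\gamma_0,\frakD_{\fraks,\ell}})\bigl(\Phi(\frakp_{\gamma,\frakD_{\fraks,\ell}})\bigr)$ is exactly the left-hand side of \eqref{eq:DI1}, while $\Phi(\rmid)=0$.

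The key lemma is that $\Phi(W)$ is unchanged (modulo $\calF^{>\ell}_{\log}$ when one works at level $\ell$) when $W$ is modified by a single application of the commutation relation \eqref{eq:com1} or of the pentagon relation \eqref{eq:pent1}. For \eqref{eq:com1}, swapping an adjacent pair $\Psi[n_1]^{c_1}\Psi[n_2]^{c_2}$ with $\{n_2,n_1\}=0$ changes the prefactor of the $\Psi[n_1]$-term by $\rho_y(\Psi[n_2]^{\pm c_2})$, which fixes $\tilde L(y^{n_1})$ because $\Psi[n_2]^{c_2}(y^{n_1})=y^{n_1}$ and $\Psi[n_2]^{c_2}(\log y^{n_1})=\log y^{n_1}$; symmetrically for the $\Psi[n_2]$-term; and every later term is untouched because the two products agree in $G$. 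For \eqref{eq:pent1}, write $W=L\cdot\Psi[n_2]^{1/c}\Psi[n_1]^{1/c}\cdot E$ with $\{n_2,n_1\}=c$, and let $E^{-1}$ abbreviate the $\rho_y$-image of the inverse of the product of the letters in $E$ (composed with the global prefactor). The two affected terms of $\Phi(W)$ combine into $E^{-1}$ applied to $\tfrac1c\tilde L(y^{n_1})+\tfrac1c\Psi[n_1]^{-1/c}(\tilde L(y^{n_2}))$, i.e.\ to the left side of \eqref{eq:pentL2}; after the move the three affected terms of $\Phi$ combine into $E^{-1}$ applied to the right side of \eqref{eq:pentL2}; and every term coming from $L$ is unchanged because its prefactor involves the inverse of $\Psi[n_2]^{1/c}\Psi[n_1]^{1/c}=\Psi[n_1]^{1/c}\Psi[n_1+n_2]^{1/c}\Psi[n_2]^{1/c}$, which is the same element of $G$ by \eqref{eq:pent1}. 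Thus \eqref{eq:pentL2} (equivalently \eqref{eq:pentL1}) is precisely the ``infinitesimal'' record of the move \eqref{eq:pent1}.

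With the key lemma in hand I run the proof of Theorem~\ref{thm:reduce1} verbatim. Fix $\ell$, and decompose an admissible loop $\gamma$ into small loops around the joints of $\frakD_{\fraks,\ell}$ joined by tethers; the $\tilde L$-contributions of the two traversals of each tether cancel since the intersection sign flips while $y_z[\,\cdot\,]$ does not, so the DI \eqref{eq:DI1} for $\gamma$ is the sum of the DIs for the joint loops. For a parallel joint the consistency relation \eqref{3eq:triv1} is reduced to a trivial one using only \eqref{eq:com1}, and by the key lemma the corresponding DI is reduced to a trivial one (indeed it vanishes termwise) without \eqref{eq:pentL1}. For a perpendicular joint, the relation \eqref{eq:const2}, namely $C^{\mathrm{in}}\equiv C^{\mathrm{out}}(\ell)$ modulo $G^{>\ell}$, is obtained from the trivial relation $C^{\mathrm{in}}=C^{\mathrm{in}}$ by a finite sequence of applications of \eqref{eq:com1} and \eqref{eq:pent1} (Proposition~\ref{prop:rank21}); applying the key lemma to each move of that sequence transforms the trivial DI $\Phi(C^{\mathrm{in}})\equiv\Phi(C^{\mathrm{in}})$ step by step, via the same sequence of pentagon identities \eqref{eq:pentL1} and trivial identities, into $\Phi(C^{\mathrm{in}})\equiv\Phi(C^{\mathrm{out}}(\ell))$ modulo $\calF^{>\ell}_{\log}$, which (after applying $\rho_y(\frakp_{\gamma_0,\frakD_{\fraks,\ell}})$) is the DI for that joint. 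Reading the sequence forwards constructs the DI from the trivial one, and reading it backwards reduces the DI to the trivial one. Summing over the finitely many joints of $\frakD_{\fraks,\ell}$ yields \eqref{eq:DI1}, and letting $\ell\to\infty$ yields \eqref{eq:DI2}; since only finitely many moves enter at each fixed degree but infinitely many are needed overall, this is the ``possibly infinitely many times'' of the statement.

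The main obstacle is the bookkeeping in the key lemma, especially the verification that the ``later'' portion $L$ of a word contributes unchanged terms under a pentagon move --- this is exactly where one must invoke the group relation \eqref{eq:pent1} to identify the two prefactors --- together with the consistent treatment of the $\log$ factor, so that $\Phi$ genuinely lands in the space of Puiseux series with $\log$ factor and the intertwining property holds on that space. The topological decomposition into joint loops and the tether cancellation are routine once the base point for all $y$-variables is fixed at $\calC^+$, and the passage to the limit $\ell\to\infty$ is exactly as in Theorem~\ref{thm:DI1}.
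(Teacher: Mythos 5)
Your proposal is correct and follows essentially the same route as the paper: your functional $\Phi(W)$ is exactly the rewriting \eqref{eq:dd2} of the DI, your ``key lemma'' (invariance of $\Phi$ under a commutation move or a pentagon move, with the truncated case absorbed into $\calF^{>\ell}_{\log}$) is precisely the paper's step-by-step procedure based on the reformulated pentagon identity \eqref{eq:pentL2} together with applying \eqref{eq:pent1} itself to the prefactors, and the reduction to small loops around joints plus the limit $\ell\to\infty$ mirrors Theorem \ref{thm:reduce1} just as in the paper's proof.
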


\begin{proof}
Fix $\ell>0$, and consider the reduction $\frakD_{\fraks, \ell}$ at $\ell$.
We may concentrate on a sufficiently small loop around a perpendicular joint $\frakj$. 
The constancy  relation is given in \eqref{eq:const2},
where the underlying configuration in $\frakD_{\fraks, \ell}$  is depicted in Figure \ref{fig:sconst2}.
The corresponding DI in \eqref{eq:DI1} has the form
\begin{align}
\label{eq:dd1}
\sum_{a=1}^{j'} s'_a \d(t'_a n'_a) \tilde L (y_{z'_a}[t'_a n'_a])
\equiv
\sum_{a=1}^{j} s_a \d(t_a n_a) \tilde L (y_{z_a}[t_a n_a])
\mod 
\calF^{> \ell}_{\log}.
\end{align}
In view of Figure \ref{fig:sconst2},
this  is rewritten in the form
\begin{align}
\label{eq:dd2}
\begin{split}
&\quad \ 
\sum_{a=1}^{j'} 
\Psi[t'_1 n'_1 ]^{- s'_1\d (t'_1 n'_1)}
\cdots
 \Psi[t'_{a-1} n'_{a-1}]^{- s'_{a-1}  \d (t'_{a-1} n'_{a-1})}
 (
 s'_a \d (t'_a n'_a)
 \tilde L (y^{t'_a n'_a})
 )
 \\
&\equiv
\sum_{a=1}^{j} 
\Psi[t_1 n_1 ]^{- s_1\d (t_1 n_1)}
\cdots
 \Psi[t_{a-1} n_{a-1}]^{- s_{a-1}  \d (t_{a-1} n_{a-1})}
 (
s_a \d (t_a n_a)
 \tilde L (y^{t_a n_a})
 )
 \end{split}
\end{align}
modulo $\calF^{> \ell}_{\log}$.
Along the procedure, say, $\calP$ of obtaining \eqref{eq:C1} from \eqref{eq:C0} by
 Proposition \ref{prop:rank21}
(more precisely, by Algorithm III.5.7 of \cite{Nakanishi22a}),
we do the following procedure:
\begin{itemize}
\item As the initial input, we set $F(y)$ to be the LHS of \eqref{eq:dd2}.
\item If the pentagon relation \eqref{eq:pent1} is applied to the adjacent pair
$\Psi[t' n']^{c'}$ and $\Psi[t n]^c$ 
in the procedure $\calP$,
we apply the pentagon identity \eqref{eq:pentL2}
to the corresponding terms in $F(y)$.
 We also apply  the pentagon relation \eqref{eq:pent1} itself
to the corresponding  pair
$\Psi[t' n']^{c'}$ and $\Psi[t n]^c$ 
in $F(y)$.
(This does not change $F(y)$ as a formal Puiseux series in $\bfy$ with $\log$ factor.)
\item If the commutative relation modulo $G^{>\ell}$
 is applied
to the adjacent pair
$\Psi[t' n']$ and $\Psi[t n]$
in the procedure $\calP$,
 we also apply  it
to the corresponding pair
$\Psi[t' n']^{c'}$ and $\Psi[t n]^c$ 
in $F(y)$.
(This occurs for the relation \eqref{eq:com1}
or  the truncation of the relation \eqref{eq:pent1} with $\deg(n_1+n_2)>\ell$.
In the former case this does not change $F(y)$,
while in the latter case the result equals to $F(y)$  modulo $\calF^{> \ell}_{\log}$.)
\end{itemize}
Then, thanks to the correspondence between 
  \eqref{eq:pentL2}
and
 \eqref{eq:pent1},
 the final result is the RHS of \eqref{eq:dd2}.
 Thus, the equality  \eqref{eq:dd2} is reduced to the trivial one.
Also, by reversing the procedure,
the identity
\eqref{eq:dd2} is obtained from a trivial one.
\end{proof}

\begin{figure}
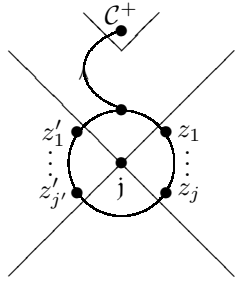

\centering
\leavevmode
\xy
(0,20)*{\text{\small $\calC^+$}};
(-9,4)*{\text{\small $z'_1$}};
(-9,-4)*{\text{\small $z'_{j'}$}};
(9,4)*{\text{\small $z_1$}};
(9,-4)*{\text{\small $z_j$}};
(8.8,1)*{\text{\small $\vdots$}};
(-9.4,1)*{\text{\small $\vdots$}};
(0,-3)*{\text{\small $\frakj$}};
(0,0)*+{\bullet};
(0,7)*+{\bullet};
(0,17.5)*+{\bullet};
(5.9,4.1)*+{\bullet};
(-5.9,4.1)*+{\bullet};
(-5.9,-4.1)*+{\bullet};
(5.9,-4.1)*+{\bullet};
(0,0)*\xycircle<20pt>{};
(1,18)*+{};(1,7)*+{}
   **\crv{(-7,16)&(-7,9)}
          ?>*\dir{};
\ar@{>}(-5,12);(-5,13)
\ar@{-} (-15,-15); (15,15)
\ar@{-} (15,-15); (-15,15)
\ar@{-} (0,15); (-5,20)
\ar@{-} (0,15); (5,20)
\endxy
\caption{Configuration for the consistency relation in
\eqref{eq:const2}.
}
\label{fig:sconst2}
\end{figure}

Note that this also provides an alternative proof of Theorem \ref{thm:DI0}
without the classical mechanical method.
 
 \begin{ex}[Type $B_2$]
 \label{ex:B2}
Let us demonstrate how the procedure in the proof of Theorem \ref{thm:DI2}  works in practice.
Consider the consistency relation  for a CSD of  type $B_2$
obtained by the successive application of the pentagon relation \cite[\S III.2.2]{Nakanishi22a},
\begin{align}
\begin{split}
\begin{bmatrix}
0\\
1
\end{bmatrix}
\left(
\begin{bmatrix}
0\\
1
\end{bmatrix}
\begin{bmatrix}
1\\
0
\end{bmatrix}
\right)
&=
\left(
\begin{bmatrix}
0\\
1
\end{bmatrix}
\begin{bmatrix}
1\\
0
\end{bmatrix}
\right)
\begin{bmatrix}
1\\
1
\end{bmatrix}
\begin{bmatrix}
0\\
1
\end{bmatrix}
\\
&=
\begin{bmatrix}
1\\
0
\end{bmatrix}
\begin{bmatrix}
1\\
1
\end{bmatrix}
\left(
\begin{bmatrix}
0\\
1
\end{bmatrix}
\begin{bmatrix}
1\\
1
\end{bmatrix}
\right)
\begin{bmatrix}
0\\
1
\end{bmatrix}
\\
&=
\begin{bmatrix}
1\\
0
\end{bmatrix}
\begin{bmatrix}
1\\
1
\end{bmatrix}
\begin{bmatrix}
1\\
1
\end{bmatrix}
\begin{bmatrix}
1\\
2
\end{bmatrix}
\begin{bmatrix}
0\\
1
\end{bmatrix}
\begin{bmatrix}
0\\
1
\end{bmatrix}
,
\end{split}
\end{align}
where the pentagon relation \eqref{eq:pent1} is applied to the  pairs in the parentheses.
The associated DI
in the form \eqref{eq:dd2} is obtained as follows:
\begin{align}
\begin{split}
&\
\Psi[\bfe_1]^{-1}\Psi[\bfe_2]^{-1}\tilde L(y^{\bfe_2})
+
(
\Psi[\bfe_1]^{-1}\tilde L (y^{\bfe_2})
+
\tilde L
(y^{\bfe_1})
)
\\
=&\
(
\Psi[\bfe_2]^{-1} \Psi[(1,1)]^{-1} \Psi[\bfe_1]^{-1}\tilde L(y^{\bfe_2})
+
\Psi[\bfe_2]^{-1} \Psi[(1,1)]^{-1} \tilde L (y^{\bfe_1})
)
\\
&\quad
+
\Psi[\bfe_2]^{-1}\tilde L (y^{(1,1)})
+
\tilde L
(y^{\bfe_2})
\\
=&\
\Psi[\bfe_2]^{-1} \Psi[(1,1)]^{-1}  \Psi[\bfe_2]^{-1} \Psi[(1,1)]^{-1}  \tilde L(y^{\bfe_1})
\\
&\quad
+
\Psi[\bfe_2]^{-1} \Psi[(1,1)]^{-1}  \Psi[\bfe_2]^{-1}   \tilde L(y^{(1,1)})
\\
&\quad
+
(\Psi[\bfe_2]^{-1} \Psi[(1,1)]^{-1} \tilde L (y^{\bfe_2})
+
\Psi[\bfe_2]^{-1}\tilde L (y^{(1,1)}))
+
\tilde L
(y^{\bfe_2})
\\
=&\
\Psi[\bfe_2]^{-1}  \Psi[\bfe_2]^{-1}   \Psi[(1,2)]^{-1} \Psi[(1,1)]^{-1} \Psi[(1,1)]^{-1}  \tilde L(y^{\bfe_1})
\\
&\quad
+
  \Psi[\bfe_2]^{-1}  \Psi[\bfe_2]^{-1}   \Psi[(1,2)]^{-1} \Psi[(1,1)]^{-1}  \tilde L(y^{(1,1)})
\\
&\quad
+
   \Psi[\bfe_2]^{-1}   \Psi[\bfe_2]^{-1}   \Psi[(1,2)]^{-1}  \tilde L (y^{(1,1)})
\\
&\quad
+
\Psi[\bfe_2]^{-1}  \Psi[\bfe_2]^{-1}  \tilde L (y^{(1,2)})
+
\Psi[\bfe_2]^{-1}\tilde L (y^{\bfe_2})
+
\tilde L
(y^{\bfe_2}).
\end{split}
\end{align}
For example, in the first equality, the pentagon identity
\eqref{eq:pentL2} is applied to the second and third terms,
while the corresponding pentagon relation  \eqref{eq:pent1} is applied
to the first term.
\end{ex}

Recall that
a sequence of mutations  for a seed of a cluster pattern
is called  a \emph{$\sigma$-period} if it acts  as a permutation $\sigma$ of the indices of the seed \cite{Nakanishi10c}.
Any $\sigma$-period of a cluster pattern corresponds to a  loop $\gamma$ 
contained in the support of the $G$-fan in the corresponding CSD.
Thus,
we have the  following  corollary.
\begin{cor}
\label{cor:DI2}
(a).
For any $\sigma$-period of a cluster pattern,
the corresponding consistency relation $\frakp_{\gamma, \frakD_{\fraks}}=\rmid$
 is reduced to a trivial one
by applying the  commutative and pentagon relations in Proposition \ref{prop:pent1}
possibly  infinitely many times.
\par
(b).
Any DI
associated with a $\sigma$-period of a cluster pattern
in \cite{Nakanishi10c}
is
reduced to a trivial one by applying the pentagon identity in \eqref{eq:pentL1}
possibly  infinitely many times.
\end{cor}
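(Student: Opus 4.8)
The plan is to realize any $\sigma$-period of the cluster pattern $\bfSigma_{\fraks}$ as an admissible loop for $\frakDs$ lying entirely in the support of the $G$-fan, and then to invoke Theorems~\ref{thm:reduce1} and~\ref{thm:DI2} directly, the only extra work being the identification of the resulting DI with the classical one of \cite{Nakanishi10c}.

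First I would make the correspondence between mutations and wall-crossings precise. Under the embedding of the union of the codimension-one cones of $\Delta(\bfG^{t_0})$ into $\mathrm{Supp}(\frakDs)$, the full-dimensional $G$-cones become the cluster chambers, and by the mutation invariance of $\frakDs$ \cite{Gross14} the codimension-one $G$-cone separating two adjacent cluster chambers is a single wall of $\frakDs$ carrying the wall element $\Psi[\varepsilon_{i;t}\bfc_{i;t}]^{\d(\varepsilon_{i;t}\bfc_{i;t})}$, i.e.\ with $t_a=s_a=1$ in \eqref{eq:gt1}, and whose intersection sign is the tropical sign $\varepsilon_{i;t}$ (cf.\ Remark~\ref{rem:yv1}(a)). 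A $\sigma$-period $\mu_{i_N}\circ\cdots\circ\mu_{i_1}$ thus visits a closed cyclic sequence of cluster chambers; joining consecutive chamber interiors by short segments and perturbing the resulting closed curve so that it meets each intervening wall transversally while avoiding $\mathrm{Sing}(\frakDs)$ yields an admissible loop $\gamma$ for $\frakDs$ whose consistency relation $\frakp_{\gamma,\frakDs}=\rmid$ is exactly the relation recording the $\sigma$-periodicity. Here one uses that within the $G$-fan region the only walls of $\frakDs$ are the codimension-one $G$-cones, each carrying one wall element of the above pure form.

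With $\gamma$ in hand, part (a) is immediate from Theorem~\ref{thm:reduce1}, which applies to an arbitrary admissible loop; the reduction may genuinely require infinitely many applications of the relations in Proposition~\ref{prop:pent1}, since $\gamma$ can pass near a perpendicular joint whose associated rank-two data is of affine or wild type. For part (b), Theorem~\ref{thm:DI2} applied to $\gamma$ shows that the DI \eqref{eq:DI2} attached to $\gamma$ is reduced to, and constructed from, a trivial one by the pentagon identity \eqref{eq:pentL1}; it then remains to identify this DI with the one of \cite{Nakanishi10c}. Since $\gamma$ stays in the $G$-fan, the sum \eqref{eq:DI2} is finite and its terms are indexed by the $N$ mutations of the period, the $a$-th contributing $\epsilon_a s_a\d(t_an_a)\tilde L(y_{z_a}[t_an_a])=\varepsilon_{i_t;t}\,\d(\varepsilon_{i_t;t}\bfc_{i_t;t})\,\tilde L(y_{i_t;t}^{\varepsilon_{i_t;t}})$ by the specialization in Remark~\ref{rem:yv1}(a); rewriting $\tilde L(x)=L(x/(1+x))$ then turns \eqref{eq:DI2} into the dilogarithm identity of \cite{Nakanishi10c}. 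I expect the only nontrivial point to be the comparison of normalizations — matching $\d(\varepsilon_{i;t}\bfc_{i;t})$ with the symmetrizer data used in \cite{Nakanishi10c} and checking that the sign and exponent conventions there agree with ours — which is routine bookkeeping but must be carried out carefully; everything else follows formally from Theorems~\ref{thm:reduce1} and~\ref{thm:DI2}.
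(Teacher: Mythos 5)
Your proposal is correct and follows essentially the same route as the paper: the $\sigma$-period is realized as an admissible loop inside the support of the $G$-fan, the wall data along it is identified via Remark~\ref{rem:yv1}(a) with the tropical signs, $c$-vectors and ordinary $y$-variables, and then Theorems~\ref{thm:reduce1} and~\ref{thm:DI2} are applied directly; the paper treats the remaining comparison of normalizations with \cite{Nakanishi10c} as immediate, just as you do.
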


Thus, our DIs have the \emph{infinite reducibility}
in view of the reducibility problem of the DIs to the pentagon relation \cite[\S2.A]{Zagier07}.

\begin{rem}
(a).
The above corollary does \emph{not} imply that 
any $\sigma$-period of a cluster pattern is reduced to a trivial one
by applying the square and pentagon periodicities \emph{in the cluster pattern itself}.
In fact,  Example \ref{ex:B2} is such a case,
where there are no square and pentagon periodicities at all
in the cluster pattern.
Some other examples are also  known \cite{Fomin08, Kim16}.
\par
(b) Even if the corresponding DI for a cluster pattern has only finitely many terms,
it might be  necessary to apply the pentagon relation \emph{infinitely many times}
to reduce it to a trivial one, in general.
\end{rem}

\section{Examples: rank 2 CSDs of affine type}

Let us present examples of DIs for rank 2 CSDs of affine type.
\subsection{Type  $A_1^{(1)}$}
\label{subsec:a11}

Without loosing generality, we may assume that $\{e_2, e_1\}=1$
\cite[\S III.1.5]{Nakanishi22a}. We consider the case $\d_1=\d_2=2$
so that the initial exchange matrix $B$ in \eqref{eq:xyB1} is given by
\begin{align}
B=
\begin{pmatrix}
0 & -2\\
2 & 0
\end{pmatrix}.
\end{align}
Let
$
\bigl[
{
n_1
\atop
n_2
}
\bigr]
$
 denote the dilogarithm element $\Psi[n]$ with $n=n_1 e_1 + n_2 e_2$.
The structure of the CSD is well-known \cite[Example 1.15]{Gross14}, \cite[Theorem 6.1]{Reineke08},
\cite[Theorem 3.4]{Reading18},
and it is represented by the following relation,
\begin{align}
\label{eq:a11rel1}
\begin{bmatrix}
0\\
1
\end{bmatrix}
^2
\begin{bmatrix}
1\\
0
\end{bmatrix}
^2
=
\begin{bmatrix}
1\\
0
\end{bmatrix}
^2
\begin{bmatrix}
2\\
1
\end{bmatrix}
^2
\begin{bmatrix}
3\\
2
\end{bmatrix}
^2
\cdots
\prod_{j=0}^{\infty}
\begin{bmatrix}
2^j\\
2^j
\end{bmatrix}
^
{2^{2-j}}
\cdots
\begin{bmatrix}
2\\
3
\end{bmatrix}
^2
\begin{bmatrix}
1\\
2
\end{bmatrix}
^2
\begin{bmatrix}
0\\
1
\end{bmatrix}
^2
.
\end{align}
See also \cite[\S III.5.7]{Nakanishi22a}, \cite{Matsushita21} for
an alternative derivation of the relation by the pentagon identity.
See Figure \ref{fig:scat1} (a).
Here, the LHS is the  path-ordered  product $\frakp_{\gamma_1, \frakDs}$ along the curve $\gamma_1$,
while
the RHS is the path-ordered product $\frakp_{\gamma_2, \frakDs}$ along the curve $\gamma_2$.
There are infinitely many walls with support $\frakd=\bbR_{\geq 0}(1,-1)\subset (1,1)^{\perp}$.
We have $\d(2^j (1,1))=2^{1-j}$. Thus, the factor $s_a$ is 2 for these walls,
and 1 for the rest.

We only need to consider the loop $\gamma_2^{-1}\circ \gamma_1$,
where the consistency relation is given by the relation \eqref{eq:a11rel1}.
Then, the associated  DI in \eqref{eq:DI1} is written as follows:
\begin{align}
\label{eq:DIa11}
&\tilde L(y_2(1+y_1)^2)+\tilde L(y_1)
=
\tilde L(y_1(1+y_2(1+y_1)^2)^{-2})
+
\Lambda
+
\tilde L(y_2)
,
\\
\begin{split}
&\Lambda= 
\tilde L (y[(2,1)]) 
+ \tilde L (y[(3,2)])
+
\cdots
\\
&\qquad
+
 \sum_{j=0}^{\infty} 2^{1-j} \tilde L (y[2^j(1,1)])
+
\cdots
+\tilde L (y[(2,3)]) 
+ \tilde L (y[(1,2)]).
\end{split}
\end{align}
In the sum $\Lambda$, $y_{z_a}[t_an_a]$ in \eqref{eq:DI1} is unambiguously parametrized by $t_a n_a \in N^+$, so that $z_a$ is omitted.
Also,  the common factor 2 is omitted.
Since the LHS is a finite sum, the RHS converges
as a function of $\bfy\in \bbR^2_{\geq 0}$.

It seems difficult to obtain the explicit expression for $y[t_a n_a]$ in $\Lambda$,
and we do not seek it here.
Instead,  we demonstrate the validity of
  the reduced identity \eqref{eq:DI1} 
for  small degrees $\ell$.

\begin{figure}
\centering
\leavevmode
\begin{xy}
0;/r1.5mm/:,
(0,-15)*{\text{(a) $A_1^{(1)}$}},
(8.7,-10)*{\cdot},
(9.2,-10)*{\cdot},
(10,-8.7)*{\cdot},
(10,-9.2)*{\cdot},
(4,9)*{\gamma_1};
(9,4)*{\gamma_2};
(7,7)*+{\bullet};(-7,-7)*+{\bullet};
(7,7)*+{};(-7,-7)*+{}
   **\crv{(0,6.7)&(-6.7,0)}
          ?>*\dir{>};
       (7,7)*+{};(-7,-7)*+{}
       **\crv{(6.7,0)&(0,-6.7)}
       ?>*\dir{>};
(0,0)="A"
\ar "A"+(0,0); "A"+(10,0)
\ar "A"+(0,0); "A"+(0,10)
\ar@{-} "A"+(0,0); "A"+(-10,0)
\ar@{-} "A"+(0,0); "A"+(0,-10)
\ar@{-} "A"+(0,0); "A"+(5,-10)
\ar@{-} "A"+(0,0); "A"+(6.66,-10)
\ar@{-} "A"+(0,0); "A"+(7.5,-10)
\ar@{-} "A"+(0,0); "A"+(8,-10)
\ar@{-} "A"+(0,0); "A"+(10,-5)
\ar@{-} "A"+(0,0); "A"+(10,-6.66)
\ar@{-} "A"+(0,0); "A"+(10,-7.5)
\ar@{-} "A"+(0,0); "A"+(10,-8)
\ar@{-} "A"+(0,0); "A"+(10,-10)
\end{xy}
\hskip70pt
\begin{xy}
0;/r1.5mm/:,
(0,-15)*{\text{(b) $A_2^{(2)}$}},
(4.1,-10)*{\cdot},
(4.6,-10)*{\cdot},
(5.5,-10)*{\cdot},
(6,-10)*{\cdot},
(0,0)="A"
\ar "A"+(0,0); "A"+(10,0)
\ar "A"+(0,0); "A"+(0,10)
\ar@{-} "A"+(0,0); "A"+(-10,0)
\ar@{-} "A"+(0,0); "A"+(0,-10)
\ar@{-} "A"+(0,0); "A"+(2.5,-10)
\ar@{-} "A"+(0,0); "A"+(3.3,-10)
\ar@{-} "A"+(0,0); "A"+(3.75,-10)
\ar@{-} "A"+(0,0); "A"+(10,-10)
\ar@{-} "A"+(0,0); "A"+(7.5,-10)
\ar@{-} "A"+(0,0); "A"+(6.6,-10)

\ar@{-} "A"+(0,0); "A"+(5,-10)
\end{xy}
\caption{Rank 2 CSDs of affine type.}
\label{fig:scat1}
\end{figure}
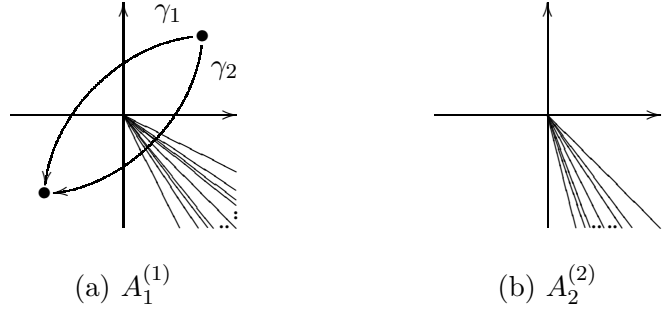

(i) $\ell=1$.
The relation \eqref{eq:a11rel1} is reduced at $\ell=1$ to 
\begin{align}
\begin{bmatrix}
0\\
1
\end{bmatrix}
^2
\begin{bmatrix}
1\\
0
\end{bmatrix}
^{2}
\equiv
\begin{bmatrix}
1\\
0
\end{bmatrix}
^2
\begin{bmatrix}
0\\
1
\end{bmatrix}
^2
\mod
G^{>1}.
\end{align}
We have
\begin{align}
y_2(1+y_1)^2\equiv y_2,
\quad
y_1(1+y_2(1+y_1)^2)^{-2}\equiv y_1
\mod
\calF^{>1}.
\end{align}
The reduced DI at $\ell=1$ is
\begin{align}
\tilde{L}(y_2) + \tilde{L}(y_1) 
\equiv
\tilde{L}(y_1) + \tilde{L}(y_2) 
\mod
\calF^{> 1}_{\log}.
\end{align}
This holds trivially.

(ii) $\ell=2$.
The relation \eqref{eq:a11rel1} is reduced at $\ell=2$ to 
\begin{align}
\begin{bmatrix}
0\\
1
\end{bmatrix}
^2
\begin{bmatrix}
1\\
0
\end{bmatrix}
^{2}
\equiv
\begin{bmatrix}
1\\
0
\end{bmatrix}
^2
\begin{bmatrix}
1\\
1
\end{bmatrix}
^4
\begin{bmatrix}
0\\
1
\end{bmatrix}
^2
\mod
G^{>2}.
\end{align}
We have, modulo $\calF^{>2}$,
\begin{align}
\begin{split}
y_2(1+y_1)^2&\equiv y_2+2y_1y_2,
\\
y_1(1+y_2(1+y_1)^2)^{-2}&\equiv y_1-2y_1y_2,
\\
y[(1,1)]&\equiv y_1y_2
.
\end{split}
\end{align}
Thus,  the reduced DI at $\ell=2$ is
\begin{align}
\tilde{L}( y_2+2y_1y_2 ) + \tilde{L}(y_1) 
\equiv
\tilde{L}(y_1-2y_1y_2) + 2 \tilde{L}(y_1y_2) +\tilde{L}(y_2) 
\mod
\calF^{> 2}_{\log}.
\end{align}
This can be also  verified directly by \eqref{eq:Lexp1}.

(iii) $\ell=3$.
The relation \eqref{eq:a11rel1} is reduced at $\ell=3$ to 
\begin{align}
\begin{bmatrix}
0\\
1
\end{bmatrix}
^2
\begin{bmatrix}
1\\
0
\end{bmatrix}
^{2}
\equiv
\begin{bmatrix}
1\\
0
\end{bmatrix}
^2
\begin{bmatrix}
2\\
1
\end{bmatrix}
^2
\begin{bmatrix}
1\\
1
\end{bmatrix}
^4
\begin{bmatrix}
1\\
2
\end{bmatrix}
^2
\begin{bmatrix}
0\\
1
\end{bmatrix}
^2
\mod
G^{>3}.
\end{align}
We have, modulo $\calF^{>3}$,
\begin{align}
\begin{split}
y_2(1+y_1)^2 &\equiv y_2+2y_1y_2 + y_1^2 y_2,
\\
y_1(1+y_2(1+y_1)^2)^{-2}&\equiv y_1-2y_1y_2-4y_1^2y_2 + 3y_1y_2^2,
\\
y[(1,2)]&\equiv y_1y_2^2. 
\\
y[(1,1)]&\equiv y_1y_2(1+y_{2})^{-2}\equiv y_1y_2- 2y_1 y_2^2.
\\
y[(2,1)]&\equiv y_1^2y_2.
\end{split}
\end{align}
Thus, the reduced DI at $\ell=3$ is
\begin{align}
\begin{split}
&\quad \ \tilde{L}( y_2+2y_1y_2+ y_1^2 y_2 ) + \tilde{L}(y_1) \\
& \equiv
\tilde{L}(y_1-2y_1y_2-4y_1^2y_2 + 3y_1y_2^2) 
+\tilde{L}(y_1^2y_2) 
+
 2 \tilde{L}(y_1y_2- 2y_1 y_2^2) 
 \\
 &\qquad 
+ \tilde{L}(y_1y_2^2) 
 +
 \tilde{L}(y_2) 
\mod
\calF^{> 3}_{\log}.
\end{split}
\end{align}
Again,
this can be also  verified directly by \eqref{eq:Lexp1}.

\subsection{Type  $A_2^{(2)}$}
 We consider the case $\d_1=1$, $\d_2=4$
so that the initial exchange matrix $B$ in \eqref{eq:xyB1} is given by
\begin{align}
B=
\begin{pmatrix}
0 & -1\\
4 & 0
\end{pmatrix}.
\end{align}
The structure of the CSD is well-known \cite{Gross14, Reading18},
and it is represented by the following relation,
\begin{align}
\label{eq:a22rel1}
\begin{split}
&
\begin{bmatrix}
0\\
1
\end{bmatrix}
^4
\begin{bmatrix}
1\\
0
\end{bmatrix}
=
\begin{bmatrix}
1\\
0
\end{bmatrix}
\begin{bmatrix}
1\\
1
\end{bmatrix}
^4
\begin{bmatrix}
3\\
4
\end{bmatrix}
\begin{bmatrix}
2\\
3
\end{bmatrix}
^4
\begin{bmatrix}
5\\
8
\end{bmatrix}
\begin{bmatrix}
3\\
5
\end{bmatrix}
^4
\cdots
\\
&
\hskip60pt
\times
\begin{bmatrix}
1\\
2
\end{bmatrix}
^6
\prod_{j=1}^{\infty}
\begin{bmatrix}
 2^j\\
 2^{j+1}
\end{bmatrix}
^
{2^{2-j}}
\cdots
\begin{bmatrix}
5\\
12
\end{bmatrix}
\begin{bmatrix}
2\\
5
\end{bmatrix}
^4
\begin{bmatrix}
3\\
8
\end{bmatrix}
\begin{bmatrix}
1\\
3
\end{bmatrix}
^4
\begin{bmatrix}
1\\
4
\end{bmatrix}
\begin{bmatrix}
0\\
1
\end{bmatrix}
^4
.
\end{split}
\end{align}
See also \cite[\S III.5.7]{Nakanishi22a}, \cite{Matsushita21} for
an alternative derivation by the pentagon identity.
Here, the canonical paring $\langle n, z\rangle$ for $N\times M_{\bbR}$ is given by
\begin{align}
\langle n, z\rangle
=(n_1,n_2)
\begin{pmatrix}
1 & 0\\
0 & 1/4
\end{pmatrix}
\begin{pmatrix}
z_1\\
z_2
\end{pmatrix}
\end{align}
under the identification $N \simeq \bbZ^2$, $e_i\mapsto \bfe_i$
and $M_{\bbR} \simeq \bbR^2$, $f_i\mapsto \bfe_i$.
There are infinitely many walls with support $\frakd=\bbR_{\geq 0}(1,-2)\subset (1,2)^{\perp}$.
We have $\d(2^j (1,2))=2^{1-j}$. Thus, the factor $s_a$ is 2 or 3 for these walls,
and 1 for the rest.
The associated  DI \eqref{eq:DI1} is written as follows:
\begin{align}
\label{eq:DIa22}
&4 \tilde L(y_2(1+y_1))+\tilde L(y_1)
=
\tilde L(y_1(1+y_2(1+y_1))^{-4})
+
\Lambda
+
4 \tilde L(y_2)
,
\\
\begin{split}
&\Lambda= 
4 \tilde L (y[(1,1)]) 
+ \tilde L (y[(3,4)])
+
\cdots
+
6
\tilde L (y[(1,2)])
\\
&\qquad
+
 \sum_{j=1}^{\infty} 2^{2-j}\tilde L (y[2^j (1, 2)])
+
\cdots
+4\tilde L (y[(1,3)]) 
+ \tilde L (y[(1,4)]).
\end{split}
\end{align}
Since the LHS is a finite sum, the RHS converges
as a function of $\bfy\in \bbR^2_{\geq 0}$.

Let us concentrate on the reduction at $\ell=3$.
The relation \eqref{eq:a22rel1} is reduced at $\ell=3$ to 
\begin{align}
\begin{bmatrix}
0\\
1
\end{bmatrix}
^4
\begin{bmatrix}
1\\
0
\end{bmatrix}
\equiv
\begin{bmatrix}
1\\
0
\end{bmatrix}
\begin{bmatrix}
1\\
1
\end{bmatrix}
^4
\begin{bmatrix}
1\\
2
\end{bmatrix}
^6
\begin{bmatrix}
0\\
1
\end{bmatrix}
^4
\mod
G^{>3}.
\end{align}
We have, modulo $\calF^{>3}$,
\begin{align}
\begin{split}
y_1(1+y_2(1+y_1))^{-4}&\equiv y_1-4y_1y_2-4y_1^2y_2 + 10y_1y_2^2,
\\
y[(1,2)]&\equiv y_1y_2^2. 
\\
y[(1,1)]&\equiv y_1y_2(1+y_{2})^{-4}\equiv y_1y_2- 4y_1 y_2^2.
\end{split}
\end{align}
Thus, the reduced DI at $\ell=3$ is
\begin{align}
\begin{split}
&\quad \ 4 \tilde{L}( y_2+y_1y_2 ) + \tilde{L}(y_1) \\
& \equiv
\tilde{L}(y_1-4y_1y_2-4y_1^2y_2 + 10y_1y_2^2) 
+
 4 \tilde{L}(y_1y_2- 4y_1 y_2^2) 
 \\
 &\qquad 
+ 6 \tilde{L}(y_1y_2^2) 
 +
 4 \tilde{L}(y_2) 
\mod
\calF^{> 3}_{\log}.
\end{split}
\end{align}
Again,
this can be also verified directly by \eqref{eq:Lexp1}.

\bibliographystyle{amsalpha}
\bibliography{../../biblist/biblist.bib}

\end{document}